%%%%%%%%%%%%%%%%%%%%%%%%%%%%%%%%%%%%%%%%%%%%%%%%
%
%             LaTeX-2e file:
%
%    Sample Path Properties of Stable Random Fields
%
%%%%%%%%%%%%%%%%%%%%%%%%%%%%%%%%%%%%%%%%%%%%%%%%

\documentclass[11pt]{article}
\usepackage{amssymb,amsmath,euscript}
\setlength{\oddsidemargin}{0in} \setlength{\evensidemargin}{0in}
\setlength{\textwidth}{6.2in} \setlength{\textheight}{8.6in}
\setlength{\topmargin}{-0.5in} \setlength{\footskip}{1cm}
\newtheorem{proposition}{Proposition}[section]

\newtheorem{lemma}[proposition]{Lemma}
\newtheorem{theorem}[proposition]{Theorem}

\def\l{{\langle}}
\def\r{\rangle}
\def\dim{{\rm dim}_{_{\rm H}}}
\def\dimh{{\rm dim}_{_{\rm H}}}
\def\dimp{{\rm dim}_{_{\rm P}}}

\def\dimr{{\rm dim}^\rho_{_{\rm H}}}

\def\R{{\mathbb R}}

\def\Q{{\mathbb Q}}
\def\a{\alpha}
\def\la{\lambda}

\def\de{\delta}
\def\ga{\gamma}

\def\ep{\varepsilon}
\def\eps{\varepsilon}
\def\si{\sigma}

\def\Re {{\rm Re}\,}

\def\E{{\mathbb E}}
\def\P{{\mathbb P}}
\def\Z{{\mathbb Z}}
\def\N{{\mathbb N}}

%
%
%%%%%%%%%%%%%%%%%%%%%%%%%%%%%%%%%%%%%%%%%%%%%%%%%%%%%%%%%%%%
%
%   Reset equation numbers at the end of every section
%
\makeatletter \@addtoreset{equation}{section} \makeatother
%
%   Each section has its own numbering
%

%
%%%%%%%%%%%%%%%%%%%%%%%%%%%%%%%%%%%%%%%%%%%%%%%%%%%%%%%%%%%%
%
%   Theorem-Like environments
%
%\newtheorem{theorem}{Theorem}[section]
%\newtheorem{lemma}[theorem]{Lemma}
%\newtheorem{proposition}[theorem]{Proposition}
%\newtheorem{corollary}[theorem]{Corollary}
%
%
%%%%%%%%%%%%%%%%%%%%%%%%%%%%%%%%%%%%%%%%%%%%%%%%%%%%%%%%%%%%
\newcommand {\qed}%
{%
    {}\hfill
    {}\hfill
    {$\square $}%
    \vspace {0.3cm}%
    \pagebreak [2]%
    \par
}%
%
%%%%%%%%%%%%%%%%%%%%%%%%%%%%%%%%%%%%%%%%%%%%%%%%%%%%%%%%%%%%
%
%   The ''proof'' environment:
%OB
%   The proper usage is
%
%       \begin{proof}{}
%       ...
%       \end{proof}
%   or,
%
%       \begin{proof}{of Theorem X}
%       ...
%       \end{proof}
%
\newenvironment{proof}[1]{%
    \vspace{0.3cm}%
    \pagebreak [2]%
    \par%
    \noindent {\bf  Proof~#1\ }}{\qed}%
%%%%%%%%%%%%%%%%%%%%%%%%%%%%%%%%%%%%%%%%%%%%%%%%%%%%%%%%%%%%%
%
%   The ''example'' counter and environment
%
%
%
%
%
%
%%%%%%%%%%%%%%%%%%%%%%%%%%%%%%%%%%%%%%%%%%%%%%%%%%%%%%%%%%%%
%
%   The ''remark'' counter and environment
%
\newenvironment{remark}{%
    \vspace{0.3cm} \pagebreak [2]%
    \par%
    \refstepcounter{proposition}
    \noindent%
   {\bf Remark~\theproposition\  }}{\ }%
%
%%%%%%%%%%%%%%%%%%%%%%%%%%%%%%%%%%%%%%%%%%%%%%%%%%%%%%%%%%%%
%%%%%%%%%%%%%%%%%%%%%%%%%%%%%%%%%%%%%%%%%%%%%%%%%%%%%%%%%%%%%
%
%   The ''question'' counter and environment
%
%
%
%%%%%%%%%%%%%%%%%%%%%%%%%%%%%%%%%%%%%%%%%%%%%%%%%%%%%%%%%%%%
%
%
%%%%%%%%%%%%%%%%%%%%%%%%%%%%%%%%%%%%%%%%%%%%%%%%%%%%%%%%%%%%%%
%
%   The ''conjecture'' counter and environment
%
%
%
%%%%%%%%%  From Ayache %%%%%%%%%%%%%%%%%%%%%%%%%%%%%%%%%%%%%%%%%%%%%%%%%%%

\def\Re {{\rm Re}\,}

\def\E{{\mathbb E}}
\def\P{{\mathbb P}}
\def\Z{{\mathbb Z}}
\def\N{{\mathbb N}}

\def\cqfd{$\square$}

%%%%%%%%%%%%%%%%%%%%%%%%%%%%%%%%%%%%%%%%%%%%%%%%%%%%%%%%%%%%%%%%%%%

\begin{document}
%

%\begin{titlepage}

\title{Wavelet Series Representation and Geometric Properties
of Harmonizable Fractional Stable Sheets}

\author{Antoine Ayache\\UMR CNRS 8524, Laboratoire Paul Painlev\'e, B\^at. M2\\
  Universit\'e de Lille \\ 59655 Villeneuve d'Ascq Cedex, France\\
E-mail: \texttt{Antoine.Ayache@univ-lille.fr}\\
\ \\
\and
Narn-Rueih Shieh\\Emeritus Room, 4F Astro-Math Building \\
National Taiwan University\\
Taipei 10617, Taiwan\\
E-mail: \texttt{shiehnr@ntu.edu.tw}\\
\and
Yimin Xiao %\thanks{Research partially supported by NSF grant DMS-0706728.}
\\Department of Statistics and Probability
\\ Michigan State University  \\
East Lansing, MI 48824, U.S.A.\\
E-mail: \texttt{xiao@stt.msu.edu}\\}

\maketitle

\begin{abstract}
Let $Z^H= \{Z^H(t), t \in \R^N\}$ be a real-valued $N$-parameter
harmonizable fractional stable sheet with index $H = (H_1,
\ldots, H_N) \in (0, 1)^N$.  We establish a random wavelet series
expansion for $Z^H$ which is almost surely convergent in all the
H\"older spaces $C^\gamma ([-M,M]^N)$, where $M>0$ and
$\gamma\in (0, \min\{H_1,\ldots, H_N\})$ are arbitrary. One of the
main ingredients for proving the latter result is the LePage
representation for a rotationally invariant stable random measure.

Also, let $X=\{X(t), t \in \R^N\}$ be an $\R^d$-valued harmonizable
fractional stable sheet whose components are independent copies of
$Z^H$. By making essential use of the regularity of its local times,
we prove that, on an event of positive probability, the formula for the 
Hausdorff dimension of the inverse image $X^{-1}(F)$ holds for all 
Borel sets $F \subseteq \R^d$. This is referred to as a uniform 
Hausdorff dimension result for the inverse images.
%some geometric properties of the image set $X(E)$ as well as the
%inverse image $X^{-1}(F)$, where $E\subseteq \R^N$ and
%$F\subseteq\R^d$ are arbitrary Borel sets.
%an $(N, d)$ harmonizable fractional stable sheet with index $H =
%(H_1, \ldots, H_N) \in (0, 1)^N$. For any Borel sets $E \subseteq
%\R^N$ and $F \subseteq \R^d$, we study the geometric properties of
%the image $X(E)$ and the inverse image $X^{-1}(F)$.These
%properties are closely related to the regularity properties of the
%local times of $X$.
\end{abstract}

\medskip

      {Running head}:  Harmonizable Fractional Stable Sheets  \\

      {\it 2000 AMS Classification numbers}: 60G52; 60G17; 60G18; 60G60.\\

{\it Key words:} Harmonizable fractional stable sheets, Wavelet series representation,
LePage representation, Strong local  nondeterminism, Local times, Hausdorff dimension, 
Inverse image.

%\end{titlepage}
%\newpage
%\clearpage

\section{Introduction}
\label{Sec:Int}

For any given $0 < \alpha < 2$ and $H = (H_1,$ $\ldots, H_N)\in
(0, 1)^N$, let $Z^{H} = \{Z^{H}(t), t \in \R^N\}$ be a real-valued
harmonizable fractional $\a$-stable sheet (HFSS, for brevity) with
index $H$, defined by
\begin{equation}\label{eq:HFSS-1}
Z^{H} (t) :=    \Re \int_{\R^N} \prod_{j=1}^N\,
\frac{e^{it_j\lambda_j}-1}{|\lambda_j|^{H_j+\frac{1}{\a}}} \,
\widetilde{M}_\a(d\lambda),
\end{equation}
where $\widetilde{M}_\alpha$ is  a complex-valued rotationally
invariant $\a$-stable random measure with Lebesgue control measure;
we refer to Chapter 6 of Samorodnitsky and Taqqu (1994), for a
detailed presentation of such random measures as well as the
corresponding stochastic integrals.

From (\ref{eq:HFSS-1}) it follows that $Z^{H}$ has the following
operator-scaling property: for any $N\times N$ diagonal matrix $E
= (b_{ij})$ with $b_{ii} = b_i > 0$ for all $1 \le i \le N$ and
$b_{ij}=0$ if $i\ne j$, we have
\begin{equation}\label{Eq:OSS}
\big\{ Z^{H}(E t),\, t \in \R^N \big\} \stackrel{d}{=} \bigg\{
\bigg(\prod_{j=1}^N b_j^{H_j}\bigg)\,   Z^{H}(t),\ t \in \R^N
\bigg\}.
\end{equation}
The property (\ref{Eq:OSS}) is also called ``multi-self-similarity" by
Genton, Perrin and Taqqu (2007). By using (\ref{eq:HFSS-1}), one can
verify that, along each canonical direction of $\R^N$, $Z^{H}$ becomes
a real-valued harmonizable fractional stable motion; a detailed
presentation of the latter process, as well as other self-similar
stable processes is given in Samorodnitsky and Taqqu (1994). When
the indices $H_1,\ldots, H_N$ are not the same, $Z^{H}$ has different
scaling behavior along different directions of $\R^N$. Namely, $Z^{H}$ is
anisotropic in the ``time'' variable.

If one replaces in (\ref{eq:HFSS-1}) the parameter $\alpha$ by $2$, then
$Z^H$ becomes a fractional Brownian sheet (FBS, for brevity) denoted by $B^H$.
Sample path properties of FBS and several related classes of anisotropic Gaussian
random fields have been studied by several authors; see, for example, Kamont (1996),
Ayache {\it et al.} (2002), Ayache and Xiao (2005), Ayache {\it et al.} (2008),
Wu and Xiao (2007, 2011), Xiao (2009) and the references therein.
Observe that, up to a multiplicative constant, the Gaussian field $B^H$, can
also be represented as a moving average Wiener integral, in which the complex-valued
kernel $\prod_{j=1}^N\,\big(e^{it_j\lambda_j}-1\big)|\lambda_j|^{-H_j-1/2}$
is replaced by the real-valued kernel $\prod_{j=1}^N\,\big\{(t_j-s_j)_{+}^{H_j-1/2}
-(-s_j)_{+}^{H_j-1/2}\big\}$, with the convention that for all real-numbers $x$ and
$\beta$, $(x)_{+}^\beta=x^\beta$ when $x>0$ and $(x)_{+}^\beta=0$ else.
However, for $0 < \alpha < 2$, the harmonizable $\a$-stable field $Z^H$ does not
have such a property, and it is very different from the real-valued linear fractional
stable field (LFSS, for brevity) $Y^{H} = \{Y^{H}(t), t \in \R^N\}$ which is defined by
\begin{equation}
\label{Eq:LFSS}
Y^H (t):=\int_{\R^N} \prod_{j=1}^N\,\Big\{(t_j-s_j)_{+}^{H_j-\frac{1}{\a}}
-(-s_j)_{+}^{H_j-\frac{1}{\a}}\Big\}\,M_\a (ds),
\end{equation}
where $M_\a$ is a real-valued $\a$-stable random measure.
% does not have the same distribution as $Z^{H}$.

The main purpose of the present article is to study sample path
properties of HFSS and compare them with the properties of LFSS
obtained in Ayache {\it et al.} (2009).
%we some results on LFSS, recently obtained
%in Ayache {\it et al.} (2009). To this end, several new and/or refined techniques
%have to be applied.  %since there are significant differencees between
%the $\a$-stable fields (\ref{eq:HFSS-1}) and (\ref{Eq:LFSS}).
%More precisely, the goal of
In Section~\ref{sec:wav-HFSS} we establish a  random wavelet series
representation for $Z^H$; to this end, we expand the corresponding
harmonizable kernel in (\ref{eq:HFSS-1}) in terms of the Fourier
transform of the tensor product of a Lemari\'e-Meyer wavelet basis
for $L^2(\R)$. One of the main difficulties in this matter is that,
when $\a < 2$ we do not know whether the Fourier transform of a
Lemari\'e-Meyer wavelet basis is an unconditional basis for $L^\a (\R)$.
This is in contrast with the case of
%such a difficulty does not exist in the case of
LFSS in Ayache {\it et al.} (2009), where one does not have to use the
Fourier transform of a wavelet basis. %but the basis itself.
In order to overcome this difficulty, we use some
specific properties of the harmonizable kernel as well as the fact
that the Fourier transform of a Lemari\'e-Meyer mother wavelet is
compactly supported and vanishes in a neighborhood of the origin.
Also, it is worth noticing that, the behavior of the random
coefficients, in our wavelet expansion of HFSS, can be estimated
through their LePage representations. This,  in turn, leads to the
conclusion that the wavelet expansion of $Z^H$ is, for any $\a\in
(0,2)$, almost surely convergent in all the H\"older spaces $C^\gamma
([-M,M]^N)$, where $M>0$ and $\gamma \in (0, \min\{H_1,\ldots, H_N\})$
are arbitrary constants.  In the case of LFSS, a weaker result holds:
under the condition that $\min\{H_1,\ldots, H_N\}>1/\a$, the wavelet
expansion of $Y^H$ is almost surely convergent in all the H\"older
spaces $C^\gamma ([-M,M]^N)$, where $M>0$ and $\gamma<\min\{H_1,
\ldots, H_N\}-1/\a$  (see Ayache {\it et al.} (2009)). Notice that, in contrast
with HFSS, when $\min\{H_1,\ldots, H_N\}\le 1/\a$, the LFSS sample path
is discontinuous and even becomes unbounded on every open set when 
the latter inequality is strict. The results in this section may be
compared with those of Ayache and Boutard (2017), where regularity
properties of harmonizable stable random fields with stationary increments
are studied by using the wavelet method, and those of Xiao (2010), Bierm\'e
and Lacaux  (2009, 2015), Panigrahi, Roy and Xiao (2018), where uniform
modulus of continuity of stable random fields including harmonizable ones
are studied by using different methods.

In Section \ref{sec:Geo}, we consider the $(N, d)$ harmonizable fractional 
stable sheet $X = \{X(t), t \in \R^N\}$ with values in $\R^d$ defined by
\begin{equation}\label{def:X}
X(t) = \big(X_1(t), \ldots, X_d(t)\big),
\end{equation}
where $X_1, \ldots, X_d$ are independent copies of $Z^{H}$. 
%We call $X$ an $(N, d)$ harmonizable fractional stable sheet and 
Our main result of this section is Theorem \ref{Th:IMdim2} which establishes
a uniform Hausdorff dimension result for the inverse images $X^{-1}(F)$ 
for all Borel sets $F \subset \R^d$, which are either deterministic or random.
This theorem is new even for fractional Brownian sheets (i.e., $\alpha = 2$)
and solves the problem raised in Remark 2.8 in Bierm\'e, Lacaux and Xiao (2009). 
%we study the Hausdorff dimension of the inverse image $X^{-1}(F)$,
%where $F \subseteq \R^d$ is a Borel set.
The methodology for proving this uniform Hausdorff dimension result is 
based on the regularity properties (e.g., H\"older conditions in the set-variable)
of the local times of $X$ which is of independent interest for $(N, d)$
random fields.  The results of this section motivate several questions that will 
need further investigation. At the end of Section 3, we provide some remarks 
and open questions.

%In the following we  
%We end the Instruction with some notations. 
Throughout this paper we use $|\cdot|$ to
denote the Euclidean norm in $\R^N$. The inner product and Lebesgue
measure in $\R^N$ are denoted by $\l \cdot, \cdot\r$ and $\la_N$,
respectively. A vector $t\in\R^N$ is written as $t =
(t_1, \ldots, t_N)$, or as $\langle c\rangle $ if $t_1= \cdots =t_N
= c$. For any $s, t \in \R^N$ we write $s \prec t$ if $s_j < t_j$
for all $j = 1, \ldots, N$. In this case $[s, t] = \prod^N_{j=1}\, [s_j,
t_j]$ is called a closed interval (or a rectangle). We will let
${\cal A}$ denote the class of all closed intervals in $\R^N$. For
two functions $f$ and $g$, the notation $f(t) \asymp g (t)$ for $t
\in T$ means that the function $f(t)/g(t)$ is bounded from below and
above by positive constants that do not depend on $t \in T$.

We will use $c$ and $c(n)$ to denote unspecified positive and finite
constants, the latter depends on $n$. Both of them may not be
the same in each occurrence. More specific constants %in Section $i$
are numbered as $ c_{1}, c_{2}, \ldots$.

\vspace{.2in}

{\bf Acknowledgements.}\ \  This work was finished during
%Ayache's visit to National Taiwan University and NCTS/Taipei, and Shieh and
Yimin Xiao's visit to Universit\'e de Lille sponsored by CEMPI 
(ANR-11-LABX-0007-01). The hospitality from this university and 
the financial support from CEMPI are appreciated. The research 
of Yimin Xiao is partially supported by NSF grants
DMS-1612885 and DMS-1607089.
%grants from the National Science Foundation.

\section{Wavelet series representation of $Z^H$}
\label{sec:wav-HFSS}

The goal of this section is to establish a random wavelet series
representation for HFSS $Z^H$. First we fix some notation that
will be extensively used in the sequel.
\begin{itemize}
\item[(i)] The function $\psi$ denotes a usual Lemari\'e-Meyer
mother wavelet, see Lemari\'e and Meyer (1986), Meyer (1992)
and Daubechies (1992). The function $\psi$ satisfies the following
properties:

$(a)$ $\psi$ is  real-valued and belongs to the Schwartz class
$S(\R)$.

$(b)$ $\widehat{\psi}$, the Fourier transform of $\psi$, is
compactly supported and its support is contained in the ring
$\{\xi\in\R\,:\,\frac{2\pi}{3}\le |\xi|\le \frac{8\pi}{3}\}$.
Recall
that the Fourier transform of a function $f\in L^2 (\R)$ is the
limit of the Fourier transforms of functions of the Schwartz class
$S(\R)$ converging to $f$; throughout this article the Fourier
transform over $S(\R)$ is defined as $({\cal
F}g)(\xi)=\widehat{g}(\xi) = (2\pi)^{-1/2}\int_{\R}
e^{-is\cdot\xi} g(s)\,ds$ and the inverse map as $({\cal
F}^{-1}h)(s)=(2\pi)^{-1/2} \int_{\R} e^{is\cdot\xi} h(\xi)\,d\xi$,
thus the Fourier transform is a bijective isometry from $L^2
(\R)$ to itself.

$(c)$ The sequence $\{\psi_{j,k}\,:\,(j,k)\in\Z^2\}$ forms an orthonormal
basis of $L^2(\R)$, where
\begin{equation}
\label{eq:def-psijk}
 \psi_{j,k}(x):=2^{j/2}\psi(2^{j}x-k), \quad \forall\ x\in\R.
\end{equation}
 Note that the isometry
property of the Fourier transform implies that the sequence
$\Big\{\overline{\widehat{\psi}_{j,k}}\,:\,(j,k)\in\Z^2\Big\}$
forms an orthonormal basis of $L^2(\R)$ as well. Moreover, simple
computation  shows that, for all $\xi\in\R$,
\begin{equation}
\label{eq:exp-hatpsijk}
\widehat{\psi}_{j,k}(\xi)=2^{-j/2}e^{-ik2^{-j}\xi}\,\widehat{\psi}(2^{-j}\xi).
\end{equation}
For the sake of convenience, for each $v\in (0,1)$ and $x\in\R$,
we set
\begin{equation}
\label{eq:coffL2F} w_{j,k}^v (x):=\int_{\R}f_v
(x,\xi)\widehat{\psi}_{j,k}(\xi)\,d\xi,
\end{equation}
where the function $f_v(x,\cdot)$ is defined by $f_v(x,0)=0$ and, for all
$\xi\in\R\setminus\{0\}$, by
\begin{equation}
\label{eq:deffonfv}
f_v(x,\xi):=\frac{e^{ix\xi}-1}{|\xi|^{v+\frac{1}{\a}}}.
\end{equation}
Observe that when $f_v(x,\cdot)\in L^2(\R)$ (this is equivalent to $1 <
2v + \frac 2 \alpha < 3$), then, by using the fact that
$\Big\{\overline{\widehat{\psi}_{j,k}}\,:\,(j,k)\in\Z^2\Big\}$ is an
orthonormal basis of the latter space, one has
\begin{equation}
\label{eq:decompFL2} f_v(x,\cdot)=\sum_{j,k\in\Z}w_{j,k}^v
(x)\overline{\widehat{\psi}_{j,k}(\cdot)},
\end{equation}
where the series converges in the $L^2(\R)$ norm.
Roughly speaking, the key idea for obtaining a
wavelet representation for HFSS consists in showing that the
equality (\ref{eq:decompFL2}) holds in $L^\a(\R)$, even
when $f_v(x,\cdot)\notin  L^2(\R)$. For this purpose,  it is convenient
to renormalize the functions $\widehat{\psi}_{j,k}$ in such a way that their
$L^\a(\R)$ norms be equal to $\|\widehat{\psi}\|_{L^\a(\R)}$.

\item[(ii)] For each $(j,k)\in\Z^2$ and $\xi\in\R$, we set
\begin{equation}
\label{eq:Lapsijk}
\widehat{\psi}_{\a,j,k}(\xi):=2^{-j(\frac{1}{\a}-\frac{1}{2})}
\widehat{\psi}_{j,k}(\xi)
=2^{-j/\a}e^{-ik2^{-j}\xi}\,\widehat{\psi}(2^{-j}\xi).
\end{equation}
Moreover for every $x\in\R$, we set
\begin{equation}
\label{eq:Lawjk}
w_{\a,j,k}^v(x):=2^{j(\frac{1}{\a}-\frac{1}{2})}w_{j,k}^v (x)
=2^{j(\frac{1}{\a}-1)}\int_{\R}\frac{e^{ix\xi}-1}{|\xi|^{v+1/\a}}e^{-ik2^{-j}\xi}\,
\widehat{\psi}(2^{-j}\xi)\,d\xi.
\end{equation}
Observe that, for all $x, \, \xi \in\R$, one has
\begin{equation}
\label{eq:LaL2}
w_{\a,j,k}^v(x)\overline{\widehat{\psi}_{\a,j,k}(\xi)}=w_{j,k}^v(x)
\overline{\widehat{\psi}_{j,k}(\xi)}.
\end{equation}

\item[(iii)] In order to conveniently express $w_{\a,j,k}^v(x)$,
let us introduce, for each  fixed $v\in (0,1)$, the function $\psi^{v}$,
defined  as
\begin{equation}
\label{eq:fracprimpsi} \psi^{v}(y):=\int_{\R}e^{iy\eta}
\frac{\widehat{\psi}(\eta)}{|\eta|^{v+1/\alpha}}\;d\eta,\quad \forall\ y\in\R.
\end{equation}
Observe that in view of the properties $(a)$ and $(b)$ of the
Lemari\'e-Meyer mother wavelet $\psi$, one can easily show that
$\psi^{v}$ is a well-defined real-valued function which belongs to
$S(\R)$. Furthermore, the change of variable $\eta=2^{-j}\xi$
in (\ref{eq:Lawjk}) yields that, for all $x\in\R$,
\begin{equation}
\label{eq:expLawjk} w_{\a,j,k}^v(x)=2^{-jv}\big\{\psi^{v}(2^j
x-k)-\psi^{v}(-k)\big\}.
\end{equation}
\item[(iv)] Let $\{\epsilon_{J,K},\, (J,K)\in\Z^N\times\Z^N\}$ be the
sequence of complex-valued S$\a$S random variables defined as
\begin{equation}
\label{eq:def-epsilon}
\epsilon_{J,K}=\int_{\R^N}\overline{\widehat{\Psi}_{\a,J,K}(\lambda)}\,
\widetilde{M}_{\a}(d\lambda),
\end{equation}
where
\begin{equation}
\label{eq:bigpsijk}
\widehat{\Psi}_{\a,J,K}(\lambda):=\prod_{l=1}^N
\widehat{\psi}_{\a,j_l,k_l}(\lambda_l), \quad \forall \lambda \in \R^N.
\end{equation}
It is easy to verify that, for every $(J,K)\in\Z^N\times\Z^N$, the
scale parameter of $\epsilon_{J,K}$ is
$
\|\epsilon_{J,K}\|_\a=\|\widehat{\psi}\|_{L^\a(\R)}^N.
$
Hence the random variables $\{\epsilon_{J,K},\, (J,K)\in\Z^N\times\Z^N\}$
are identically distributed.
\end{itemize}

We are now in position to state our first main result of the section.

\begin{proposition}
\label{thm:wav-prob} Let $({\mathcal D}_n)_{n\in\N}$ be an increasing 
sequence of finite subsets of $\Z^N\times\Z^N$ which ``converges" to 
$\Z^N\times\Z^N$ (i.e., for every $n\in \N$, ${\mathcal D}_n\subset 
{\mathcal D}_{n+1}$ and $\cup_{n\in\N} {\mathcal D}_n=\Z^N\times\Z^N$).
For each $n\in\N$ and $t\in\R^N$, we denote by $U_n(t)$ the real-valued
random variable defined as
\begin{equation} \label{eq:def-Un}
U_{n}(t)=\Re\sum_{(J,K)\in {\mathcal D}_n}2^{-
\langle J,H \rangle }\epsilon_{J,K}\prod_{l=1}^N
\left\{\psi^{H_l}(2^{j_l}t_l-k_l)-\psi^{H_l}(-k_l)\right\}.
\end{equation}
Then $U_{n}(t)$ converges in probability to $Z^H(t)$ when $n$ goes
to infinity.
\end{proposition}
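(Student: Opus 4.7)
The plan is to rewrite $U_n(t)$ as a stable stochastic integral and reduce the convergence in probability to an $L^\alpha$--convergence of integrands. Combining the tensor identity (2.10), namely $2^{-\langle J,H\rangle}\prod_{l=1}^N\{\psi^{H_l}(2^{j_l}t_l-k_l)-\psi^{H_l}(-k_l)\}=\prod_{l=1}^N w^{H_l}_{\alpha,j_l,k_l}(t_l)$, with the definition (2.12) of $\epsilon_{J,K}$ as a stable integral, the linearity of the stochastic integral, and the substitution (2.8), one obtains
$$
U_n(t)=\Re\int_{\R^N} g_n(t,\lambda)\,\widetilde{M}_\alpha(d\lambda),\qquad
g_n(t,\lambda):=\sum_{(J,K)\in\mathcal{D}_n}\prod_{l=1}^N w^{H_l}_{j_l,k_l}(t_l)\,\overline{\widehat{\psi}_{j_l,k_l}(\lambda_l)}.
$$
On the other hand, $Z^H(t)=\Re\int_{\R^N} g(t,\lambda)\,\widetilde{M}_\alpha(d\lambda)$ with $g(t,\lambda):=\prod_{l=1}^N f_{H_l}(t_l,\lambda_l)\in L^\alpha(\R^N)$ (since each $f_{H_l}(t_l,\cdot)\in L^\alpha(\R)$ when $H_l\in(0,1)$). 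The real-valued S$\alpha$S variable $Z^H(t)-U_n(t)$ has scale parameter proportional to $\|g(t,\cdot)-g_n(t,\cdot)\|_{L^\alpha(\R^N)}$, so convergence in probability is equivalent to this $L^\alpha$--norm tending to $0$.

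I would establish the $L^\alpha$--convergence in two steps. First, for product partial sums over boxes $B_R:=(\{-R,\dots,R\}\times\{-R,\dots,R\})^N\subset\Z^N\times\Z^N$, the tensor factorization $g_{B_R}(t,\lambda)=\prod_{l=1}^N h^{H_l}_R(t_l,\lambda_l)$ with $h^v_R(x,\lambda):=\sum_{|j|,|k|\le R}w^v_{j,k}(x)\overline{\widehat{\psi}_{j,k}(\lambda)}$, together with the identity $\|\prod_l h_l\|_{L^\alpha(\R^N)}^\alpha=\prod_l\|h_l\|_{L^\alpha(\R)}^\alpha$ and a telescoping argument (using the $\alpha$--triangle inequality when $\alpha<1$), reduces the multi-dimensional convergence $g_{B_R}\to g$ in $L^\alpha(\R^N)$ to the one-dimensional claim: for every $v\in(0,1)$ and $x\in\R$, $h^v_R(x,\cdot)\to f_v(x,\cdot)$ in $L^\alpha(\R)$ as $R\to\infty$. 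Second, to handle an arbitrary $\mathcal{D}_n$, I would sandwich $B_{r_n}\subseteq\mathcal{D}_n\subseteq B_{R_n}$ with $r_n\to\infty$ (possible since $\mathcal{D}_n\uparrow\Z^N\times\Z^N$), and bound the tail $\|g_n-g_{B_{r_n}}\|_{L^\alpha}$ by the tail of an absolutely $L^\alpha$--summable series. Indeed, the elementary computations $\|\widehat{\psi}_{j,k}\|_{L^\alpha(\R)}=2^{j(1/\alpha-1/2)}\|\widehat{\psi}\|_{L^\alpha(\R)}$ and (2.10) give the per-factor bound $|w^v_{j,k}(x)|\cdot\|\widehat{\psi}_{j,k}\|_{L^\alpha(\R)}\le C\,2^{-jv}|\psi^v(2^jx-k)-\psi^v(-k)|$; the Schwartz decay of $\psi^v\in S(\R)$ gives summability in $k$ for each fixed $j$, while splitting the $j$--sum into $j\ge 0$ (geometric decay $2^{-jv}$) and $j<0$ (using the mean value theorem to extract an extra factor $|2^jx|$, yielding geometric decay $2^{j(1-v)}$) produces a finite total. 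Hence the series converges unconditionally in $L^\alpha$, and the tail vanishes as $r_n\to\infty$.

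The main obstacle is the one-dimensional convergence $h^v_R(x,\cdot)\to f_v(x,\cdot)$ in $L^\alpha(\R)$ in the regime where $f_v(x,\cdot)\notin L^2(\R)$, which occurs unless $1<2v+2/\alpha<3$: the $L^2$--wavelet expansion (2.7) is then unavailable and, as the authors emphasize, $\{\overline{\widehat{\psi}_{j,k}}\}$ is not known to be an unconditional basis of $L^\alpha(\R)$ for $\alpha<2$. To bypass this I would exploit the compact Fourier support of $\widehat{\psi}$ away from $0$: truncate $f^M_v(x,\xi):=f_v(x,\xi)\mathbf{1}_{\{2^{-M}\le|\xi|\le 2^M\}}\in L^2\cap L^\alpha$; apply (2.7) to $f^M_v$, observing that since $\widehat{\psi}_{j,k}$ is supported in the annulus $2^j\{2\pi/3\le|\xi|\le 8\pi/3\}$ only scales $|j|\le M+O(1)$ contribute nontrivially, and the wavelet coefficients of $f^M_v$ differ from $w^v_{j,k}(x)$ only by small boundary corrections near $|j|\approx M$; then let $M\to\infty$, using dominated convergence ($f^M_v\to f_v$ in $L^\alpha$) together with the absolute summability bounds above to control the corrective contributions. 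Identification of the $L^\alpha$--limit with $f_v(x,\cdot)$ can finally be verified pointwise via a Poisson summation / aliasing computation that exploits the MRA identity $\sum_{j\in\Z}|\widehat{\psi}(2^{-j}\lambda)|^2={\rm const}$, completing the proof.
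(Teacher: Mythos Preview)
Your proposal is correct and is essentially the paper's own argument: reduce to $L^\alpha$-convergence of integrands, establish absolute $L^\alpha$-summability of the terms via the Schwartz decay of $\psi^v$ together with the $j\ge 0$/$j<0$ split (this is the paper's Lemmas~2.4--2.5), and identify the limit with the HFSS kernel by truncation so that the $L^2$-wavelet expansion applies (the paper's Lemma~2.6).

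Two organizational points where the paper is more streamlined than your sketch. First, once absolute summability is in hand, the abstract completeness statement (the paper's Lemma~2.3) gives directly a unique $L^\alpha$-limit for \emph{every} increasing $(\mathcal D_n)$, so your separate ``first step'' for product boxes $B_R$ is unnecessary; the paper only uses a specific box-type sequence afterwards to \emph{identify} the limit. Second, for that identification the paper truncates only at low frequencies, multiplying $f_v(x,\cdot)$ by $h_m=\mathbf 1_{\{|\xi|>2^{-m+1}\pi/3\}}$; this already lies in $L^2(\R)$ because the high-frequency tail $|\xi|^{-2v-2/\alpha}$ is always integrable when $\alpha<2$. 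One then compares on the bounded annulus $\mathcal C_m$, where the support property of $\widehat\psi_{j,k}$ forces the wavelet coefficients of $f_v h_m$ to coincide \emph{exactly} with $w^v_{j,k}(x)$---no ``boundary corrections'' arise---and H\"older carries $L^2$-convergence to $L^\alpha$ on $\mathcal C_m$; exhausting $\R\setminus\{0\}=\bigcup_m\mathcal C_m$ finishes. Your two-sided cutoff with boundary terms works too, but is heavier; the final appeal to a Poisson-summation/MRA identity is not needed.
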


In order to be able to prove Proposition~\ref{thm:wav-prob}, we
need some preliminary results.
\begin{remark}
\label{rem:metLa} Recall that, in contrast to the case where $\a
\ge 1$, when $\a\in (0,1)$, the map $f\mapsto \Big
(\int_{\R^N}|f(\lambda)|^\a \;d\lambda\Big)^{1/\a}$ is no longer a
norm on $L^\a (\R^N)$; however one can define %(see for example
%Exercise 24 of Chapter 3 in Rudin (1986)), in a natural way,
a metric $\Delta$ on this space by
\begin{equation}
\label{eq1:metric-delta} \Delta_ {L^\a (\R^N)}(f,g):=\int_{\R^N}
|f(\lambda)-g(\lambda)|^\a\;d\lambda, \quad \forall \ (f,g)\in 
L^\a (\R^N)\times L^\a (\R^N).
\end{equation}
Moreover the resulting metric space is complete. For the sake of convenience,
when $\a \ge 1$, one sets for every $(f,g)\in L^\a (\R^N)\times L^\a (\R^N)$,
\begin{equation}
\label{eq2:metric-delta} \Delta_ {L^\a
(\R^N)}(f,g):=\|f-g\|_{L^\a (\R^N)}=
\left(\int_{\R^N}|f(\lambda)-g(\lambda)|^\a\;d\lambda\right)^{1/\a}.
\end{equation}
For simplicity we will abuse the notation slightly and write $\|f\|_{L^\a (\R^N)}$ as
$\|f \|_\a$, which should not be confused with the scale parameter of a S$\a$S random
variable.
\end{remark}

We will make use of the following elementary lemma.
\begin{lemma} \label{rem:metabsconv}
Let $(E,d)$ be a complete metric vector space such that the metric $d$ is
translation invariant, namely for
all $x,y,z\in E$ one has,
\begin{equation}
\label{eq1:metabsconv} d(x+z,y+z)=d(x,y).
\end{equation}
Let $(a_i)_{i\in {\mathcal I}}$ be an arbitrary sequence
%\footnote{Notice that we do not necessarily suppose that ${\mathcal I}\subseteq \N$
%but only  that ${\mathcal I}$ is a countable set.}
of elements of $E$ which satisfies
\begin{equation}
\label{eq2:metabsconv} \sum_{i\in {\mathcal I}}d(a_i,0)<+\infty.
\end{equation}
Then there is a unique element $a\in E$ satisfying the following 
property: For each increasing sequence $({\mathcal D}_n)_{n\in\N}$ of finite
subsets of ${\mathcal I}$ which converges to ${\mathcal I}$ (i.e.,
 ${\mathcal D}_n\subset {\mathcal D}_{n+1}$ for every $n\in \N$
and $\cup_{n\in\N} {\mathcal D}_n={\mathcal I}$), one has
$$
\lim_{n\rightarrow +\infty} d\Big(a,\sum_{i\in{\mathcal D}_n}
a_i\Big)=0.
$$
\end{lemma}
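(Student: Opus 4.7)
My plan is to exploit translation invariance of $d$ to reduce the statement to a classical Cauchy--criterion argument. I would first observe that, setting $p(x) := d(x,0)$, the triangle inequality together with \eqref{eq1:metabsconv} gives
\[
p(x+y) = d(x+y,0) \le d(x+y,y) + d(y,0) = p(x) + p(y),
\]
while $p(-x) = d(-x,0) = d(0,x) = p(x)$. By induction, for any finite $A \subset \mathcal{I}$ and any signs $\sigma_i \in \{\pm 1\}$, $p\!\left(\sum_{i\in A}\sigma_i a_i\right) \le \sum_{i\in A} p(a_i)$. Combined with \eqref{eq2:metabsconv}, this yields the Cauchy criterion: writing $S := \sum_{i\in\mathcal{I}} p(a_i) < +\infty$ (the supremum over finite subsets), for every $\varepsilon>0$ there exists a finite $F_\varepsilon\subset\mathcal{I}$ such that $\sum_{i\in F'} p(a_i) < \varepsilon$ for every finite $F'\subset \mathcal{I}\setminus F_\varepsilon$.

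Next I would construct the candidate limit. Fix one increasing sequence $(\mathcal{D}_n^0)_{n\in\N}$ of finite subsets with $\bigcup_n \mathcal{D}_n^0 = \mathcal{I}$ and set $s_n^0 := \sum_{i\in \mathcal{D}_n^0} a_i$. Since $F_\varepsilon$ is finite, $\mathcal{D}_n^0 \supset F_\varepsilon$ for all $n$ large enough; then for $m\ge n$ the set $\mathcal{D}_m^0\setminus \mathcal{D}_n^0$ is disjoint from $F_\varepsilon$, hence
\[
d(s_m^0, s_n^0) \;=\; p(s_m^0 - s_n^0) \;\le\; \sum_{i \in \mathcal{D}_m^0 \setminus \mathcal{D}_n^0} p(a_i) \;<\; \varepsilon.
\]
Thus $(s_n^0)$ is Cauchy in $(E,d)$, and completeness produces a limit $a \in E$.

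Finally I would establish independence of the choice of sequence. Let $(\mathcal{D}_n)$ be an arbitrary increasing sequence of finite subsets with $\bigcup_n \mathcal{D}_n = \mathcal{I}$, and set $s_n := \sum_{i\in \mathcal{D}_n} a_i$. Given $\varepsilon>0$, I would pick $N$ large enough that, for every $n\ge N$, both $\mathcal{D}_n$ and $\mathcal{D}_n^0$ contain $F_\varepsilon$ and $d(s_n^0,a)<\varepsilon$. The symmetric difference then satisfies $\mathcal{D}_n\triangle \mathcal{D}_n^0 \subset (\mathcal{D}_n\setminus F_\varepsilon)\cup(\mathcal{D}_n^0\setminus F_\varepsilon)$, so subadditivity and symmetry of $p$ give $d(s_n, s_n^0) \le \sum_{i \in \mathcal{D}_n\triangle \mathcal{D}_n^0} p(a_i) < 2\varepsilon$, and the triangle inequality yields $d(s_n,a) < 3\varepsilon$. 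Hence $s_n \to a$, and uniqueness of $a$ is immediate from the defining property. I do not anticipate any genuine obstacle: this is the classical argument for unconditional convergence in an $F$-space, and the only mild care required is to derive the signed-sum subadditivity from translation invariance and to interpret hypothesis \eqref{eq2:metabsconv} via its supremum definition over finite subsets.
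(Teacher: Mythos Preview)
Your proof is correct and follows essentially the same approach as the paper: both exploit translation invariance to obtain the subadditivity $d\big(\sum_{i\in A}a_i,0\big)\le\sum_{i\in A}d(a_i,0)$, derive the Cauchy property of the partial sums, and then verify that the limit is independent of the exhausting sequence. The only cosmetic differences are that you package the argument through the functional $p(x)=d(x,0)$ and a fixed finite set $F_\varepsilon$, and you compare two sequences via the symmetric difference, whereas the paper bounds the tail directly by $\sum_{i\in\mathcal I\setminus\mathcal D_m}d(a_i,0)$ and compares two sequences by nesting one inside a subsequence of the other.
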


\noindent {\bf Proof} \; Relation (\ref{eq1:metabsconv}) implies that, for every
$p\in\N$ and $m\in\N$,
\begin{equation}
\label{eq3:metabsconv}
d\Big(\sum_{i\in{\mathcal
D}_{m+p}}a_i,\sum_{i\in{\mathcal D}_{m}}a_i\Big)=
d\Big(\sum_{i\in {\mathcal D}_{m+p}\setminus {\mathcal
D}_{m}}a_i,0\Big).
\end{equation}
Moreover, using the triangle inequality and (\ref{eq1:metabsconv}), one can prove,
by induction on the cardinality of ${\mathcal D}_{m+p}\setminus {\mathcal D}_{m}$, that,
\begin{equation}
\label{eq4:metabsconv}
d\Big(\sum_{i\in {\mathcal D}_{m+p}\setminus {\mathcal D}_{m}}a_i,0\Big)\le
\sum_{i\in {\mathcal I}\setminus {\mathcal
D}_m} d(a_i,0).
\end{equation}
Putting together (\ref{eq2:metabsconv}), (\ref{eq3:metabsconv}) and
(\ref{eq4:metabsconv}), one can easily show that
$\big(\sum_{i\in{\mathcal D}_n} a_i\big)_{n\in\N}$ is a Cauchy
sequence of $(E,d)$, which in turn implies that it converges to
some limit $a\in E$. Let us now show that the limit $a$ does not
depend on the choice of the sequence $({\mathcal D}_n)_{n\in\N}$.
Let $({\mathcal D'}_n)_{n\in\N}$ and $({\mathcal D''}_n)_{n\in\N}$
be two arbitrary increasing sequences of finite subsets of
${\mathcal I}$ which converge to ${\mathcal I}$. We denote by
$a'$ and $a''$, respectively, the limits of the sequences
$(\sum_{i\in{\mathcal D'}_n} a_i)_{n\in\N}$ and
$(\sum_{i\in{\mathcal D''}_n} a_i)_{n\in\N}$.
Observe that there exists an increasing map $\phi:\N\rightarrow\N$
such that, for every $n\in\N$, one has ${\mathcal D}'_n\subseteq
{\mathcal D''}_{\phi(n)}$. Therefore, it follows from
(\ref{eq1:metabsconv}) that, for each $n\in\N$,
\begin{equation}
\label{eq5:metabsconv}
d\Big(\sum_{i\in{\mathcal D''}_{\phi(n)}} a_i, \sum_{i\in{\mathcal
D'}_n} a_i\Big) = d\Big(\sum_{i\in{\mathcal
D''}_{\phi(n)}\setminus {\mathcal D'}_n} a_i, 0\Big)\le\sum_{i\in
{\mathcal I}\setminus {\mathcal D'}_n} d(a_i,0),
\end{equation}
where the last inequality is derived in the same way as the
inequality (\ref{eq4:metabsconv}). Finally, letting $n$ go
to infinity, one obtains, in view of (\ref{eq2:metabsconv})
and (\ref{eq5:metabsconv}), that $d(a'',a')=0$. \cqfd

\medskip 

%Applying Lemma \ref{rem:metabsconv} to the metric space $L^\a (\R^N)$, we obtain

In the following lemma, recall that $w_{\a,j_l,k_l}^{H_l}(t_l)$ is  defined in
(\ref{eq:Lawjk}) and $\widehat{\Psi}_{\a,J,K}$ in (\ref{eq:bigpsijk}).

\begin{lemma}
\label{lem:absconvKHFSS} For each fixed $t\in\R^N$, one has:
\begin{equation}
\label{eq1:absconvKHFSS} \sum_{(J,K)\in\Z^N\times\Z^N}\Delta_{L^\a
(\R^N)}\bigg (\Big\{\prod_{l=1}^N
w_{\a,j_l,k_l}^{H_l}(t_l)\Big\}\overline{\widehat{\Psi}_{\a,J,K}},\,
0\bigg)<+\infty.
\end{equation}
Consequently, in view of Lemma
\ref{rem:metabsconv}, there exists a function $F(t)\in L^\a (\R^N)$ such that,
for every increasing sequence $({\mathcal D}_n)_{n\in\N}$ of
finite subsets of $\Z^N\times\Z^N$ which converges to
$\Z^N\times\Z^N$, one has
\begin{equation}
\label{eq2:absconvKHFSS} \lim_{n\rightarrow +\infty}\Delta_{L^\a
(\R^N)}\bigg(\sum_{(J,K)\in {\mathcal D}_n}\Big\{\prod_{l=1}^N
w_{\a,j_l,k_l}^{H_l}(t_l)\Big\}\overline{\widehat{\Psi}_{\a,J,K}},\, F(t)\bigg)=0.
\end{equation}
\end{lemma}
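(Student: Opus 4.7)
The plan is to exploit the tensor-product structure of $\widehat\Psi_{\a,J,K}$ to reduce (\ref{eq1:absconvKHFSS}) to a product of one-dimensional sums, bound those sums by splitting according to the sign of the scale index $j$, and finally produce $F(t)$ via Lemma~\ref{rem:metabsconv}.

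First, because $\widehat\Psi_{\a,J,K}(\lambda)=\prod_{l=1}^N \widehat\psi_{\a,j_l,k_l}(\lambda_l)$ is a tensor product, the change of variables $\eta_l=2^{-j_l}\lambda_l$ gives $\|\widehat\psi_{\a,j,k}\|_{L^\a(\R)}=\|\widehat\psi\|_{L^\a(\R)}$ for every $(j,k)\in\Z^2$, and Fubini yields
$$
\Delta_{L^\a(\R^N)}\Bigl(\Bigl\{\prod_{l=1}^N w_{\a,j_l,k_l}^{H_l}(t_l)\Bigr\}\overline{\widehat\Psi_{\a,J,K}},\,0\Bigr)=c_\a\prod_{l=1}^N|w_{\a,j_l,k_l}^{H_l}(t_l)|^{p},
$$
where $p=\a$ if $\a<1$, $p=1$ if $\a\ge 1$, and $c_\a$ is a positive constant depending only on $\a$, $N$ and $\|\widehat\psi\|_{L^\a(\R)}$. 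The sum in (\ref{eq1:absconvKHFSS}) therefore factorizes as a product over $l$, and the task reduces to proving, for every fixed $v\in(0,1)$ and $x\in\R$, the one-dimensional estimate
$$
\sum_{(j,k)\in\Z^2}|w_{\a,j,k}^v(x)|^p<+\infty.
$$

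To establish this, I would use the explicit formula (\ref{eq:expLawjk}), namely $w_{\a,j,k}^v(x)=2^{-jv}\{\psi^v(2^jx-k)-\psi^v(-k)\}$, together with the Schwartz-class membership of $\psi^v$, and split according to the sign of $j$. For $j\ge 0$, the triangle inequality gives $|\psi^v(2^jx-k)-\psi^v(-k)|^p\le C\bigl(|\psi^v(2^jx-k)|^p+|\psi^v(-k)|^p\bigr)$; since $\psi^v\in S(\R)$, the sum $\sum_{k\in\Z}|\psi^v(y-k)|^p$ is bounded uniformly in $y\in\R$, so $\sum_{k}|w_{\a,j,k}^v(x)|^p\le C\,2^{-jvp}$, summable over $j\ge 0$ because $v>0$. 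For $j<0$, the mean value theorem combined with the Schwartz decay of $(\psi^v)'$ yields $|\psi^v(2^jx-k)-\psi^v(-k)|\le C_M\,2^j|x|(1+|k|)^{-M}$ for arbitrarily large $M$, hence $\sum_k|w_{\a,j,k}^v(x)|^p\le C(x)\,2^{jp(1-v)}$, summable over $j<0$ because $v<1$.

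Finally, since the metric $\Delta_{L^\a(\R^N)}$ is translation invariant and the resulting metric space is complete (Remark~\ref{rem:metLa}), the summability (\ref{eq1:absconvKHFSS}) together with Lemma~\ref{rem:metabsconv} produces a unique $F(t)\in L^\a(\R^N)$ satisfying (\ref{eq2:absconvKHFSS}) for every exhausting sequence $({\mathcal D}_n)$. The main subtlety lies in the splitting in the one-dimensional estimate: a sup-norm bound on $\psi^v(2^jx-k)-\psi^v(-k)$ is too crude at low frequencies, where $2^{-jv}$ blows up as $j\to-\infty$, while the mean value bound is wasteful at high frequencies; both endpoints of the range $v\in(0,1)$ are genuinely used.
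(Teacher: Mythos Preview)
Your proposal is correct and follows essentially the same route as the paper: factorize via the tensor structure into one-dimensional sums, then split on the sign of $j$, using the Schwartz decay of $\psi^v$ for $j\ge 0$ and the mean value theorem together with the decay of $(\psi^v)'$ for $j<0$, and finally invoke Lemma~\ref{rem:metabsconv}. The only cosmetic difference is that the paper isolates the pointwise bounds on $w_{\a,j,k}^v(x)$ as a separate lemma (Lemma~\ref{rem:estimwjk}) and writes out only the case $\a\in(0,1)$, whereas you treat both regimes at once via the exponent $p$.
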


In order to show (\ref{eq1:absconvKHFSS}), we first estimate the
decreasing rate of the coefficients $w_{\a,j,k}^v(x)$.
\begin{lemma}\label{rem:estimwjk}
For any constant $L > 0$, the following two results hold:
\begin{itemize}
\item[(i)] There exists a constant $c_1>0$,  depending only on $L$,
$v$ and $\a$, such that for each
$x\in\R$, $j\in\Z_+$ and $k\in\Z$, one has
\begin{equation}
\label{eq1:estimwjk} |w_{\a,j,k}^v(x)|\le c_1 2^{-jv}\Big\{\big(2+|2^j
x-k|\big)^{-L}+\big(2+|k|\big)^{-L}\Big\}.
\end{equation}
\item[(ii)]  For any positive number $M$,   there exists a constant $c_2>0$,
depending only  on $L$, $v$, $\a$ and $M$, such that for each
$x\in [-M,M]$, $j\in\Z_{-}$ and $k\in\Z$, one has
\begin{equation}
\label{eq2:estimwjk} |w_{\a,j,k}^v(x)|\le c_2 2^{(1-v)j}\big(2+|k|\big)^{-L}.
\end{equation}
\end{itemize}
\end{lemma}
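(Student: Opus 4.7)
The plan is to exploit the fact that the function $\psi^v$ defined by (\ref{eq:fracprimpsi}) belongs to the Schwartz class $S(\R)$, along with all its derivatives. Indeed, by properties $(a)$ and $(b)$ of the Lemari\'e--Meyer mother wavelet, the map $\eta\mapsto \widehat{\psi}(\eta)/|\eta|^{v+1/\alpha}$ is smooth (because $\widehat{\psi}$ vanishes in a neighborhood of $0$, so the singularity of $|\eta|^{-v-1/\alpha}$ is irrelevant) and compactly supported, hence lies in $S(\R)$. Up to the constant $\sqrt{2\pi}$, $\psi^v$ is therefore the inverse Fourier transform of a Schwartz function, and so $\psi^v$ and $(\psi^v)'$ are themselves Schwartz. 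In particular, for every $L>0$ there exists a constant $c_L>0$ such that
\[
|\psi^v(y)| \le c_L (2+|y|)^{-L} \qquad \text{and} \qquad |(\psi^v)'(y)| \le c_L (2+|y|)^{-L}, \qquad \forall\, y\in\R.
\]

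For part (i), I would start from the closed-form expression (\ref{eq:expLawjk}) and simply apply the triangle inequality:
\[
|w_{\a,j,k}^v(x)| \;\le\; 2^{-jv}\bigl(|\psi^v(2^j x-k)| + |\psi^v(-k)|\bigr).
\]
Plugging in the Schwartz decay estimate for $\psi^v$ at the two points $2^j x -k$ and $-k$ yields (\ref{eq1:estimwjk}), with a constant depending only on $L$, $v$ and $\a$. Note that the restriction $j\in\Z_+$ is not used here; this crude bound is valid for all $j\in\Z$, but the factor $2^{-jv}$ is only useful when $j\ge 0$.

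For part (ii), the crude triangle inequality is insufficient: when $j\to-\infty$ the prefactor $2^{-jv}$ blows up, so we must exploit the cancellation built into the difference $\psi^v(2^j x-k)-\psi^v(-k)$. I would write
\[
\psi^v(2^j x - k) - \psi^v(-k) \;=\; 2^j x \int_0^1 (\psi^v)'\bigl(2^j x s - k\bigr)\,ds,
\]
so that $|w_{\a,j,k}^v(x)| \le 2^{-jv}\,|2^j x|\,\sup_{s\in[0,1]}|(\psi^v)'(2^j x s - k)|$. Since $j\le 0$ and $|x|\le M$, we have $|2^j x s|\le M$, and so for $|k|\ge M+1$ the argument satisfies $|2^j x s - k|\ge |k|-M$; combined with the Schwartz decay of $(\psi^v)'$ and adjusting the constant for small $|k|$, this gives $|(\psi^v)'(2^j x s - k)|\le c(2+|k|)^{-L}$ uniformly in $s\in[0,1]$. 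The factor $|2^j x|\le 2^j M$ then converts $2^{-jv}$ into $2^{j(1-v)}$, yielding (\ref{eq2:estimwjk}).

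The only potentially delicate step is verifying the uniform Schwartz bound on $(\psi^v)'$ over the interval swept out by $2^j x s - k$; this is the key point that lets part (ii) work, and it is precisely where we gain the extra factor $2^j$ that makes the estimate summable as $j\to -\infty$ when applied later in Lemma \ref{lem:absconvKHFSS}. Apart from that observation, both parts are routine consequences of the Schwartz regularity of $\psi^v$.
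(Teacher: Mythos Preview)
Your proof is correct and follows essentially the same approach as the paper: part (i) uses the Schwartz decay of $\psi^v$ applied to the explicit formula (\ref{eq:expLawjk}), and part (ii) extracts the factor $2^j x$ via a first-order Taylor expansion of $\psi^v$ (the paper uses the Mean Value Theorem to get a single intermediate point, you use the integral form, but the idea is identical). The only cosmetic difference is that the paper bounds $|(\psi^v)'(y)|$ by $c_3(2+M+|y|)^{-L}$, which via the triangle inequality $|k|\le M+|2^j d-k|$ yields $(2+|k|)^{-L}$ in one stroke, avoiding your case split on small $|k|$.
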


\noindent {\bf Proof}\; Part $(i)$  follows easily from (\ref{eq:expLawjk})
and the fact that $\psi^v\in S(\R)$. Next we prove Part $(ii)$. Denote by
$(\psi^v)'$ the derivative of $\psi^v$, then by using again the fact that
$\psi^v\in S(\R)$, one has, for all $y\in\R$,
\begin{equation}
\label{eq3:estimwjk}
\big|(\psi^v)'(y)\big|\le c_3 \big (2+M+|y|\big)^{-L},
\end{equation}
where $c_3$ is a constant  depending on $L$, $v$, $\a$ and $M$ only.
On the other hand (\ref{eq:expLawjk}) and the Mean Value Theorem imply
that, for each $x\in [-M,M]$, $j\in\Z_{-}$ and $k\in\Z$, one has
\begin{equation}
\label{eq4:estimwjk}
w_{\a,j,k}^v(x)=2^{j(1-v)}x(\psi^v)'(2^j d -k),
\end{equation}
where $d\in (-M,M)$. Thus, combining (\ref{eq3:estimwjk}) with
(\ref{eq4:estimwjk}) and the triangle inequality, one gets
(\ref{eq2:estimwjk}).
\cqfd\\

\noindent {\bf Proof of Lemma~\ref{lem:absconvKHFSS}}\; We will
only give the proof for the  case where $\a\in (0,1)$,  the proof
in the case $\a\in [1,2)$ is rather similar. First, notice that,
for every $(J,K)\in\Z^N\times\Z^N$, one has,
$$
\int_{\R^N}\left|\overline{\widehat{\Psi}_{\a,J,K}(\lambda)}\right|^\a\,d\lambda
=\Big(\int_{\R}|\widehat{\psi}(\xi)|^\a\,d\xi\Big)^N:= c_4.
$$
Therefore, using (\ref{eq1:metric-delta}), one gets that, for all $t\in\R^N$,
\begin{equation}
\label{eq3:absconvKHFSS}
\sum_{(J,K)\in\Z^N\times\Z^N}\Delta_{L^\a
(\R^N)}\Big (\Big\{\prod_{l=1}^N
w_{\a,j_l,k_l}^{H_l}(t_l)\Big\}\overline{\widehat{\Psi}_{\a,J,K}},0\Big)
= c_4\prod_{l=1}^N \Big (\sum_{(j_l,k_l)\in \Z\times\Z}
|w_{\a,j_l,k_l}^{H_l}(t_l)|^\a\Big).
\end{equation}
On the other hand, it follows from  (\ref{eq1:estimwjk}), (\ref{eq2:estimwjk}), and
the inequality $(x+y)^\a\le x^\a+y^\a$ for  $x,\, y \ge 0$ that, for every $l=1,\ldots, N$,
\begin{equation}
\label{eq4:absconvKHFSS}
\sum_{(j_l,k_l)\in \Z\times\Z}
|w_{\a,j_l,k_l}^{H_l}(t_l)|^\a \le  c_5\, \sum_{j_l=0}^{+\infty}2^{-j_l\a \gamma}
\sum_{k_l=-\infty}^{+\infty}\Big\{(2+|2^{j_l} t_l-k_l|)^{-L\a}+(2+|k_l|)^{-L\a}\Big\},
\end{equation}
where $\gamma= \min\{H_l,1-H_l\}$ and $c_5$ is a constant independent of $(j_l,k_l)$. Moreover
for every constant  $L$ which satisfies $L\a>1$, we have
\begin{equation}
\label{eq5:absconvKHFSS}
\sup_{y\in\R}\bigg\{\sum_{k=-\infty}^{+\infty}(2+|y-k|)^{-L\a}\bigg\}<+\infty.
\end{equation}
Finally, putting together (\ref{eq3:absconvKHFSS}),
(\ref{eq4:absconvKHFSS}) and(\ref{eq5:absconvKHFSS}), one gets 
(\ref{eq1:absconvKHFSS}).
\cqfd
\\

The following lemma shows that the function $F(t)$  in (\ref{eq2:absconvKHFSS}) 
is in fact the kernel function corresponding to HFSS (see (\ref{eq:HFSS-1})).
\begin{lemma}
\label{lem:FeqKHFSS} For every $t\in\R^N$ and for Lebesgue almost
all $\lambda\in\R^N$, one has
\begin{equation}
\label{eq:FKHFSS} F(t,\lambda)=\prod_{l=1}^N
f_{H_l}(t_l,\lambda_l):=\prod_{l=1}^N
\frac{e^{it_l\lambda_l}-1}{|\lambda_l|^{H_l+\frac{1}{\a}}}.
\end{equation}
%Recall that $F(t)$ has been introduced in (\ref{eq2:absconvKHFSS}).
\end{lemma}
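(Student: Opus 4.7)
The plan is to identify $F(t)$ with the product of harmonizable kernels by combining the uniqueness of $F(t)$ (established in Lemma~\ref{lem:absconvKHFSS}) with a one-dimensional pointwise identification that exploits the compact support of $\widehat{\psi}$ in an annulus bounded away from $0$ and $\infty$.

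Since by Lemma~\ref{lem:absconvKHFSS} the element $F(t)\in L^\alpha(\R^N)$ is independent of the exhausting sequence $({\mathcal D}_n)$, I would specialize to the product sequence ${\mathcal D}_n=\{(J,K):\ |j_l|\le n,\ |k_l|\le n,\ l=1,\ldots,N\}$. Combined with identity~(\ref{eq:LaL2}), the corresponding partial sum then factorizes as
\[
\sum_{(J,K)\in{\mathcal D}_n}\Big\{\prod_{l=1}^N w_{\alpha,j_l,k_l}^{H_l}(t_l)\Big\}\overline{\widehat{\Psi}_{\alpha,J,K}(\lambda)}=\prod_{l=1}^N\sigma_n^{H_l}(t_l,\lambda_l),
\]
where $\sigma_n^{v}(s,\mu):=\sum_{|j|,|k|\le n}w_{j,k}^{v}(s)\,\overline{\widehat{\psi}_{j,k}(\mu)}$. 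Since this partial sum converges to $F(t)$ in $L^\alpha(\R^N)$ by Lemma~\ref{lem:absconvKHFSS}, one may extract a subsequence converging almost everywhere; so it suffices to prove the one-dimensional identity $\lim_{n\to\infty}\sigma_n^{v}(s,\mu)=f_v(s,\mu)$ for a.e. $\mu\in\R$.

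For the 1D identification, I would fix $\mu\neq 0$ and use the support property $\text{supp}(\widehat{\psi})\subset\{2\pi/3\le|\eta|\le 8\pi/3\}$: only $j$ lying in a finite set $J_\mu\subset\Z$ (of cardinality at most $3$) contributes, so for $n$ large the sum collapses to $\sum_{j\in J_\mu}\sum_{|k|\le n}w_{j,k}^v(s)\overline{\widehat{\psi}_{j,k}(\mu)}$. For each fixed $j\in J_\mu$, the change of variables $\eta=2^{-j}\xi$ in (\ref{eq:coffL2F}) rewrites the inner sum, up to a factor $\overline{\widehat{\psi}(2^{-j}\mu)}$, as a partial Fourier series of the Schwartz, compactly-supported function $\eta\mapsto f_v(s,2^j\eta)\widehat{\psi}(\eta)$, whose limit in $n$ exists by classical Fourier-series theory. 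To identify this limit with $f_v(s,\mu)$, I introduce the $L^2$-truncation $f_v^R(s,\xi):=f_v(s,\xi)\mathbf{1}_{\{1/R<|\xi|<R\}}$: for $R$ large enough (relative to $\max_{j\in J_\mu}|j|$), its wavelet coefficients $w_{j,k}^{v,R}(s):=\int f_v^R(s,\xi)\widehat{\psi}_{j,k}(\xi)\,d\xi$ coincide with $w_{j,k}^v(s)$ for every $k\in\Z$, because the support of $\widehat{\psi}_{j,k}$ is independent of $k$. The standard $L^2$-reconstruction of $f_v^R(s,\cdot)$ in the orthonormal basis $\{\overline{\widehat{\psi}_{j,k}}\}$ then identifies $\sum_{j\in J_\mu}\lim_n\sum_{|k|\le n}w_{j,k}^v(s)\overline{\widehat{\psi}_{j,k}(\mu)}$ with $f_v(s,\mu)$ for a.e. $\mu\in\{1/R<|\mu|<R\}$; exhausting $\R\setminus\{0\}$ as $R\to\infty$ yields the claim.

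The main obstacle is this last step: the $L^2$-wavelet reconstruction holds in $L^2$-norm, not a priori pointwise for the rectangular partial sums $\{|j|,|k|\le n\}$. This is overcome precisely by the compact Fourier support of $\widehat{\psi}$: at each fixed $\mu\neq 0$, only a finite number of scales $j$ contribute, and for each such $j$ the $k$-sum is a rapidly convergent Fourier series of a Schwartz function, so the rectangular summation order is immaterial and the identification transfers cleanly from $f_v^R$ to $f_v$.
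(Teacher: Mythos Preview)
Your proposal is correct and follows essentially the same route as the paper. Both arguments factor the $N$-dimensional partial sum into a product of one-dimensional sums via a product exhaustion (the paper uses $D_{n,M}$, you use $\{|j|,|k|\le n\}$; Lemma~\ref{lem:absconvKHFSS} makes the choice irrelevant), and both identify the one-dimensional limit by truncating $f_v(s,\cdot)$ to reduce to $L^2$ and exploiting that $\mathrm{supp}\,\widehat\psi_{j,k}$ is contained in an annulus independent of $k$, so that truncated and untruncated coefficients agree for the relevant scales.

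The only notable difference is the final identification step. The paper truncates only near $0$ (the function $h_m$), works in $L^2$ on the bounded ring $\mathcal C_m$, applies H\"older to pass to $L^\alpha(\mathcal C_m)$, and compares two $L^\alpha$ limits there. You truncate on both sides ($1/R<|\xi|<R$) so that $f_v^R\in L^2(\R)$ globally, and then compare an a.e.\ subsequential limit with the everywhere-existing pointwise limit of $\sigma_n^v(s,\mu)$ (the latter coming from $|J_\mu|<\infty$ plus absolute convergence in $k$ due to $\psi^v\in S(\R)$). Both routes are valid; one small correction: the range on which $w_{j,k}^{v,R}=w_{j,k}^v$ for all $j\in J_\mu$ is slightly smaller than $\{1/R<|\mu|<R\}$ (roughly $\{4/R<|\mu|<R/4\}$), but the exhaustion as $R\to\infty$ is unaffected.
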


\noindent {\bf Proof} \, For any constant  $M >0$ and  $n\in\N$, we set
\begin{equation}
\label{eq:def-D-Dn}
D_{n,M}:=\{(j,k)\in\Z^2\,:\, |j|\le n \mbox{ and } |k|\le M2^{n+1}\}
\end{equation}
and
\begin{equation}
\label{eq:def-D-Dn-N}
D_{n,M}^N:=\big\{(J,K)\in \Z^N\times\Z^N\,:\, (j_l,k_l)\in D_{n,M}
\mbox{ for each $l=1,\ldots, N$}\big\}.
\end{equation}
Similarly to (\ref{eq1:absconvKHFSS}), one can show
that, for every fixed $t\in\R^N$ and  $l=1,\ldots, N$,
$$
\sum_{(j_l,k_l)\in\Z\times\Z}\Delta_{L^\a (\R)}\Big
(w_{\a,j_l,k_l}^{H_l}(t_l)\overline{\widehat{\psi}_{\a,j_l,k_l}},\, 0\Big)<+\infty.
$$
By applying Lemma \ref{rem:metabsconv} with  ${\mathcal D}_n=D_{n,M}$,
we see that  there exists a function $\widetilde{f_l}(t_l)\in L^\a (\R)$ such that
\begin{equation}
\label{eq1:convker1} \lim_{n\rightarrow +\infty}\Delta_{L^\a
(\R)}\Big(\sum_{(j_l,k_l)\in D_{n,M}}
w_{\a,j_l,k_l}^{H_l}(t_l)\overline{\widehat{\psi}_{\a,j_l,k_l}},\, 
\widetilde{f}_l(t_l)\Big)=0.
\end{equation}
%where $M$ is a fixed arbitrary positive real-number and $D_{n,M}$ is the set of indices
%defined in (\ref{eq:def-D-Dn} below).
It follows from (\ref{eq:bigpsijk}), (\ref{eq1:convker1}), (\ref{eq:def-D-Dn-N})
and (\ref{eq2:absconvKHFSS}) with  ${\mathcal D}_n=D_{n,M}^N$ that, for almost 
all $\lambda\in\R^N$,
$$
F(t,\lambda)=\prod_{l=1}^N \widetilde{f}_l(t_l,\lambda_l).
$$
Thus, in order to show (\ref{eq:FKHFSS}), it is
sufficient to prove that, for every $l=1,\ldots, N$ and for almost
all $\lambda_l\in\R$, one has
\begin{equation}
\label{eq:tildefeqf}
\widetilde{f}_l(t_l,\lambda_l)=f_{H_l}(t_l,\lambda_l).
\end{equation}
For every $m\in\N$, let $h_m$ be the function defined, for all
$\lambda_l\in\R$, as
\begin{equation} \label{eq:defhm}
h_m(\lambda_l)=\left\{
\begin{array}{ll}
0, \quad &\hbox{ if $|\lambda_l|\le \frac{2^{-m+1}\pi}{3}$},\\
1, &\hbox{ else.}
\end{array}
\right.
\end{equation}
Observe that (\ref{eq:deffonfv}) and (\ref{eq:defhm}) imply
that $f_{H_l}(t_l,\cdot)h_m(\cdot) \in L^2 (\R)$ and, since
$\big\{\overline{\widehat{\psi}_{j_l,k_l}}\,:\,(j_l,k_l)\in
\Z^2\big\}$ is an orthonormal basis of the latter space,
one gets
\begin{equation}
\label{eq1:fhmL2} \lim_{n\rightarrow+\infty}\int_\R \Big
|f_{H_l}(t_l,\lambda_l)h_m(\lambda_l)-\sum_{(j_l,k_l)\in
D_{n,M}}\widetilde{w}_{j_l,k_l}(t_l)
\overline{\widehat{\psi}_{j_l,k_l}(\lambda_l)}\Big|^2\,d\lambda_l=0,
\end{equation}
where
\begin{equation}
\label{eq:tildewjkfh} \widetilde{w}_{j_l,k_l}(t_l):=\int_\R
f_{H_l}(t_l,\lambda_l)h_m(\lambda_l)\widehat{\psi}_{j_l,k_l}(\lambda_l)\,d\lambda_l.
\end{equation}
On the other hand, the property $(b)$ of $\psi$ (given at
the beginning of this section) and (\ref{eq:exp-hatpsijk}) entails
that
\begin{equation}
\label{eq:supphatpsijk}
\mbox{supp}\,\widehat{\psi}_{j_l,k_l}\subseteq
\Big\{\lambda_l\in\R\,:\,\frac{2^{j_l+1}\pi}{3}\le |\lambda_l|\le
\frac{2^{j_l+3}\pi}{3}\Big\}.
\end{equation}
Putting together (\ref{eq:tildewjkfh}), (\ref{eq:defhm}),
(\ref{eq:supphatpsijk}) and (\ref{eq:coffL2F}),
we see that, for every $j_l\ge -m$ and $k_l\in\Z$,
\begin{equation}
\label{eq:tildewjkeqwjk}
\widetilde{w}_{j_l,k_l}(t_l)=w_{j_l,k_l}^{H_l}(t_l).
\end{equation}
Denote by ${\mathcal C}_m$ the ring,
\begin{equation}
\label{eq:defcrowncm} {\mathcal C}_m:=
\Big\{\lambda_l\in\R\,:\,\frac{2^{-m+3}\pi}{3}\le |\lambda_l|\le
\frac{2^{m+3}\pi}{3}\Big\}\,.
\end{equation}
From now on, we assume that $\lambda_l\in{\mathcal C}_m$. Notice that
(\ref{eq:supphatpsijk}) implies that, for all $j_l\le -m$ and
$k_l\in\Z$, one has that
$$\widehat{\psi}_{j_l,k_l}(\lambda_l)=0.$$
Therefore, in view of (\ref{eq:tildewjkeqwjk}), one obtains that
\begin{equation}
\label{eq:sumcrowncm} \sum_{(j_l,k_l)\in
D_{n,M}}\widetilde{w}_{j_l,k_l}(t_l)\overline{\widehat{\psi}_{j_l,k_l}(\lambda_l)}
=\sum_{(j_l,k_l)\in  D_{n,M}}w_{j_l,k_l}^{H_l}(t_l)\overline{\widehat{\psi}_{j_l,k_l}
(\lambda_l)}.
\end{equation}
On the other hand (\ref{eq:defhm}) entails that
\begin{equation}
\label{eq:hmcrowncm}
f_{H_l}(t_l,\lambda_l)h_m(\lambda_l)=f_{H_l}(t_l,\lambda_l) \ \
\hbox{ for } \lambda_l \in{\mathcal C}_m .
\end{equation}
It follows from  (\ref{eq:sumcrowncm}), (\ref{eq:hmcrowncm})
and (\ref{eq1:fhmL2})  that
$$
\lim_{n\rightarrow+\infty}\int_{{\mathcal C}_m} \Big
|f_{H_l}(t_l,\lambda_l)-\sum_{(j_l,k_l)\in
D_{n,M}}w_{j_l,k_l}^{H_l}(t_l)\overline{\widehat{\psi}_{j_l,k_l}
(\lambda_l)}\Big|^2\,d\lambda_l=0.
$$
Since  ${\mathcal C}_m$ is a bounded set, H\"older's inequality
and (\ref{eq:LaL2}) imply that
\begin{equation}
\label{eq:limfHcrowncm} \lim_{n\rightarrow+\infty}\int_{{\mathcal
C}_m} \Big |f_{H_l}(t_l,\lambda_l)-\sum_{(j_l,k_l)\in
D_{n,M}}w_{\a,j_l,k_l}^{H_l}(t_l)\overline{\widehat{\psi}_{\a,j_l,k_l}
(\lambda_l)}\Big|^\a\,d\lambda_l=0.
\end{equation}
On the other hand, (\ref{eq1:convker1}) entails that
\begin{equation}
\label{eq:limtildefcrowncm}
\lim_{n\rightarrow+\infty}\int_{{\mathcal C}_m} \Big
|\widetilde{f}_l(t_l,\lambda_l)-\sum_{(j_l,k_l)\in
D_{n,M}}w_{\a,j_l,k_l}^{H_l}(t_l)\overline{\widehat{\psi}_{\a,j_l,k_l}
(\lambda_l)}\Big|^\a\,d\lambda_l=0.
\end{equation}
It follows from (\ref{eq:limfHcrowncm}) and (\ref{eq:limtildefcrowncm})
that, for almost all $\lambda_l\in {\mathcal C}_m$, one has
$f_{H_l}(t_l,\lambda_l)=\widetilde{f}_l(t_l,\lambda_l)$. Finally,
by using the latter equality and the fact that
$\R\setminus\{0\}=\cup_{m\in\N}{\mathcal C}_m$, one gets
(\ref{eq:tildefeqf}). \cqfd\\

We are now in position to show Proposition~\ref{thm:wav-prob}.

\noindent {\bf Proof of Proposition~\ref{thm:wav-prob}}\, This
proposition is a straightforward consequence of Lemmas
\ref{lem:absconvKHFSS} and \ref{lem:FeqKHFSS} as well as of
the following standard result on integrals with respect to
stable measures: if $g_n$ converges to $g$ in $L^\a (\R^N)$ then
$\int_{\R^N}g_n(\lambda)\,\widetilde{M}_\a(d\lambda)$ converges to
$\int_{\R^N}g(\lambda)\,\widetilde{M}_\a(d\lambda)$ in probability
(see Samorodnitsky and Taqqu (1994)). \cqfd

\medskip
The second main result of this section is the following theorem.

\begin{theorem}
\label{thm:unif-conv-wav} For any constant $M >0$ and $n\in\N$ we
denote by $U_{n}^*$ the real-valued random function defined, for
every $t\in\R^N$, as
\begin{equation*}
\label{eq:def-Un2} U_{n}^*(t)=\Re\sum_{(J,K)\in D_{n,M}^N}2^{-
\langle J,H \rangle }\epsilon_{J,K}\prod_{l=1}^N
\left\{\psi^{H_l}(2^{j_l}t_l-k_l)-\psi^{H_l}(-k_l)\right\},
\end{equation*}
where $D_{n,M}^N$ is defined in (\ref{eq:def-D-Dn-N}). Then,
the following two results hold:
\begin{itemize}
\item[(i)] with probability $1$, $\{U_{n}^*\}_{n\in\N}$ is a
Cauchy sequence in the H\"older space $C^\gamma\big([-M,M]^N\big)$
of any order $\gamma<\min\{H_1,\ldots , H_N\}$, the limit is
denoted by $U^*$;
\item[(ii)] the random field $U^*=\{U^*(t)\}_{t\in [-M,M]^N}$
is a version of the HFSS $\{Z^H(t)\}_{t\in [-M,M]^N}$.
\end{itemize}
\end{theorem}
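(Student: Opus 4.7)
The plan is to prove (i) by showing that, almost surely, the series defining $U_n^*$ converges absolutely in the Banach space $C^\gamma([-M,M]^N)$, and then to deduce (ii) from Proposition~\ref{thm:wav-prob} combined with uniqueness of limits in probability. Absolute convergence requires two ingredients: (a) uniform H\"older estimates for the deterministic factors $\prod_{l=1}^N \big\{\psi^{H_l}(2^{j_l}t_l - k_l) - \psi^{H_l}(-k_l)\big\}$; and (b) an almost-sure, $(J,K)$-uniform pointwise bound on the stable coefficients $\epsilon_{J,K}$ that grows at most polylogarithmically. The former refines Lemma~\ref{rem:estimwjk}; the latter is the key novel input and relies on the LePage representation of the rotationally invariant random measure $\widetilde{M}_\a$.

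For (a), set $\phi^{H_l}_{j_l,k_l}(t_l):=\psi^{H_l}(2^{j_l}t_l-k_l)-\psi^{H_l}(-k_l)$. Interpolating the Schwartz bound with the Mean-Value estimate $2^{j_l}|t_l-s_l|\,\|(\psi^{H_l})'\|_\infty$ gives, for $j_l\ge 0$, $s_l,t_l\in[-M,M]$, and any $\gamma\in(0,1)$,
$$|\phi^{H_l}_{j_l,k_l}(t_l)-\phi^{H_l}_{j_l,k_l}(s_l)|\le c\,(2^{j_l}|t_l-s_l|)^\gamma\big[(2+|2^{j_l}t_l-k_l|)^{-L}+(2+|2^{j_l}s_l-k_l|)^{-L}\big],$$
while for $j_l<0$, the Mean-Value argument leading to (\ref{eq4:estimwjk}) yields
$$|\phi^{H_l}_{j_l,k_l}(t_l)-\phi^{H_l}_{j_l,k_l}(s_l)|\le c\,2^{j_l}|t_l-s_l|(2+|k_l|)^{-L}.$$
A standard telescoping identity for differences of $N$-fold products, together with the $L^\infty$ bounds of Lemma~\ref{rem:estimwjk}, then yields a $C^\gamma([-M,M]^N)$-seminorm bound on $\prod_l \phi^{H_l}_{j_l,k_l}$ that grows like $2^{j_l\gamma}$ along positive-$j_l$ directions and $2^{j_l}$ along negative-$j_l$ directions, with Schwartz decay in the coordinates of $K$.

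For (b), I would invoke the LePage representation of $\widetilde{M}_\a$ (Chapter 3 of Samorodnitsky and Taqqu (1994)) to realize all the $\epsilon_{J,K}$ as functionals of one common Poisson point process on $\R^N\times\mathbb{T}$ (with $\mathbb{T}$ the unit circle) with arrival times $\Gamma_m$ and isotropic phases. Exploiting the fact that $\widehat{\Psi}_{\a,J,K}$ is supported in the scaled ring $\prod_l\{\lambda_l:\tfrac{2^{j_l+1}\pi}{3}\le|\lambda_l|\le\tfrac{2^{j_l+3}\pi}{3}\}$ and has rapid decay, together with a Borel--Cantelli argument, I expect to obtain, for any $\eta>0$, an almost surely finite random constant $C(\omega)$ such that
$$|\epsilon_{J,K}|\le C(\omega)\big(1+\log(2+|J|)+\log(2+|K|)\big)^{1/\a+\eta},\quad (J,K)\in\Z^N\times\Z^N.$$
Combining (a) and (b) with the weight $2^{-\langle J,H\rangle}$ and splitting the sum according to the signs of the $j_l$'s, the resulting bound factorizes into products of one-dimensional geometric sums of the form $\sum_{j_l\ge 0}2^{-j_l(H_l-\gamma)}$ (convergent since $\gamma<H_l$) and $\sum_{j_l<0}2^{-j_l(H_l-1)}$ (convergent since $H_l<1$), together with bounded sums over the $k_l$'s and the polylogarithmic envelope that does not affect convergence. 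Hence $\{U_n^*\}_{n\in\N}$ is Cauchy in $C^\gamma([-M,M]^N)$ almost surely, proving (i).

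For (ii), Proposition~\ref{thm:wav-prob} applied to $\mathcal{D}_n=D_{n,M}^N$ yields $U_n^*(t)\to Z^H(t)$ in probability for each fixed $t$, while (i) gives $U_n^*(t)\to U^*(t)$ almost surely; by uniqueness of the limit in probability, $U^*(t)=Z^H(t)$ almost surely for every fixed $t$, so $U^*$ is a version of $Z^H$ on $[-M,M]^N$. The main technical obstacle is step (b): since the family $\{\epsilon_{J,K}\}$ is highly dependent (all the variables are built from the common measure $\widetilde{M}_\a$), marginal tail arguments are inadequate, and one must exploit the common Poisson structure of the LePage representation together with the compact-ring support and Schwartz-type decay of $\widehat{\psi}$ to govern the whole family by a single almost-surely finite envelope.
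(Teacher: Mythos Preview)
Your approach matches the paper's: part~(i) is obtained by combining H\"older estimates on the deterministic factors (your step~(a), refining Lemma~\ref{rem:estimwjk}) with an almost-sure uniform envelope on the coefficients $|\epsilon_{J,K}|$ via the LePage representation (your step~(b)), and part~(ii) follows from Proposition~\ref{thm:wav-prob} applied with $\mathcal{D}_n=D_{n,M}^N$ together with uniqueness of limits in probability, exactly as you argue.

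One correction to step~(b): the envelope the LePage argument actually delivers (stated in the paper as Proposition~\ref{prop:estim-randcoeff}, following Lemma~2.7 of Ayache and Boutard (2017)) is \emph{polynomial} in the scale indices $|j_l|$, not logarithmic---specifically $|\epsilon_{J,K}|\le C(\omega)\prod_{l=1}^N(1+|j_l|)^{1/\a+\eta}$ for $\a\in(0,1)$, with additional $\sqrt{\log(2+|j_l|)}\sqrt{\log(2+|k_l|)}$ factors when $\a\in[1,2)$. Your proposed bound $\big(1+\log(2+|J|)+\log(2+|K|)\big)^{1/\a+\eta}$ is too optimistic in the $J$-direction. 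This does not break your argument, since any sub-exponential-in-$|j_l|$ envelope is absorbed by the geometric weight $2^{-j_lH_l}$ in the convergence step; but you should adjust the expected form of the bound accordingly.
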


\noindent {\bf Proof}\, Part $(i)$ of the
theorem can be proved by using the wavelet method similar to that
in the proof of Proposition~6 in Ayache {\it et al} (2009). It also follows
from Proposition~\ref{prop:estim-randcoeff} below, which provides a sharp
``deterministic" upper bound for the random variables $|\epsilon_{J,K}|$.
Part $(ii)$ of the theorem is an easy consequence of Part $(i)$ and
of Proposition~\ref{thm:wav-prob} with ${\mathcal D}_n=D_{n,M}^N$.
\cqfd

\begin{proposition}
\label{prop:estim-randcoeff} There is an event $\Omega^*$ of probability $1$
such that for all fixed $\eta>0$ there exists a random variable $C>0$ 
(depending on $\Omega^*$, $\eta$ and $\a$) which satisfies the following
property:
\begin{itemize}
\item[(i)] If $\a\in (0,1)$, then  for every
$\omega\in\Omega^*$ and for all $(J,K)\in\Z^N\times\Z^N$,
\begin{equation}
\label{eq1:dub-espJK} |\epsilon_{J,K}(\omega)|\le
C(\omega)\prod_{l=1}^N (1+|j_l|)^{1/\alpha+\eta}\,.
\end{equation}
\item[(ii)] If $\a\in [1,2)$, then  for every
$\omega\in\Omega^*$ and for all $(J,K)\in\Z^N\times\Z^N$,
\begin{equation}
\label{eq2:dub-espJK} |\epsilon_{J,K}(\omega)|\le
C(\omega)\prod_{l=1}^N
(1+|j_l|)^{1/\alpha+\eta}\sqrt{\log(2+|j_l|)}\sqrt{\log(2+|k_l|)}\,.
\end{equation}
\end{itemize}
\end{proposition}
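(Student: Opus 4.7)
My approach is based on the LePage series representation of the complex rotationally invariant $\alpha$-stable random measure $\widetilde{M}_\alpha$ (see Samorodnitsky and Taqqu (1994), Chapter~3), adapted to the infinite Lebesgue control measure. Choose a probability measure $\nu$ on $\R^N$ equivalent to Lebesgue measure with density $\rho=d\nu/d\lambda_N$ and let $\phi=1/\rho$; the LePage representation then gives, jointly for all $(J,K)\in\Z^N\times\Z^N$,
\begin{equation*}
\epsilon_{J,K}\stackrel{d}{=}c_\alpha\sum_{n=1}^{+\infty}g_n\,\Gamma_n^{-1/\alpha}\,\phi^{1/\alpha}(V_n)\,\overline{\widehat{\Psi}_{\alpha,J,K}(V_n)},
\end{equation*}
where $\{\Gamma_n\}$ are the arrival times of a unit-rate Poisson process on $[0,+\infty)$, the $V_n$ are i.i.d.\ with law $\nu$, and the $g_n$ are i.i.d.\ uniform on the unit circle of $\mathbb{C}$, the three sequences being mutually independent. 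The crucial feature is that this single random object represents every $\epsilon_{J,K}$ simultaneously, so that a uniform pathwise bound in $(J,K)$ is meaningful. Two structural facts then come into play. First, from (\ref{eq:Lapsijk})--(\ref{eq:bigpsijk}), $\widehat{\Psi}_{\alpha,J,K}$ is supported on $A_J:=\prod_{l=1}^N\{\lambda_l\in\R:2^{j_l+1}\pi/3\le |\lambda_l|\le 2^{j_l+3}\pi/3\}$, on which $|\widehat{\Psi}_{\alpha,J,K}|\le c\prod_l 2^{-j_l/\alpha}$ and the $K$-dependence enters only through the phases $e^{-ik_l 2^{-j_l}V_n^{(l)}}$. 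Second, choosing $\rho$ with polynomial decay (e.g.\ $\rho(\xi)=c_\nu\prod_l(1+\xi_l^2)^{-1}$) keeps $\phi^{1/\alpha}(V_n)$ on $A_J$ at most polynomial in $\prod_l 2^{|j_l|}$, which can be absorbed into the $\prod_l(1+|j_l|)^\eta$ slack.

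For $\alpha\in(0,1)$, since $\Gamma_n\sim n$ almost surely and $1/\alpha>1$, the series $\sum_n\Gamma_n^{-1/\alpha}$ converges absolutely almost surely. Taking moduli under the sum then yields an estimate of $|\epsilon_{J,K}|$ independent of $K$ (only the phase depends on $K$ and $|e^{i\theta}|=1$) of the form
\begin{equation*}
|\epsilon_{J,K}|\le c\prod_l 2^{-j_l/\alpha}\sup_{\xi\in A_J}\phi^{1/\alpha}(\xi)\,\sum_{n:\,V_n\in A_J}\Gamma_n^{-1/\alpha}.
\end{equation*}
A Borel--Cantelli argument over the countable index set $\Z^N$ of values of $J$, applied to the Poisson counts $N_J:=\#\{n:V_n\in A_J\}$ (which are Poisson with mean $\nu(A_J)$) and to the order statistics of $\Gamma_n$, would then extract the factor $\prod_l(1+|j_l|)^{1/\alpha+\eta}$ on a single event $\Omega^*$ of probability one, giving (\ref{eq1:dub-espJK}).

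For $\alpha\in[1,2)$ the LePage series is only conditionally convergent, and the term-by-term argument no longer applies. I would instead combine two ingredients: (i) moment estimates $\E|\epsilon_{J,K}|^p<+\infty$ for every $p<\alpha$, which follow from the scale parameter identity $\|\epsilon_{J,K}\|_\alpha=\|\widehat{\psi}\|^N_{L^\alpha(\R)}$, and (ii) a maximal inequality for the S$\alpha$S sequence $\{\epsilon_{J,K}\}_{K\in\Z^N}$, which for each fixed $J$ is stationary in $K$. A Borel--Cantelli argument over $J\in\Z^N$ coupled with a dyadic decomposition of the range of $K$ should then produce the additional logarithmic factors $\sqrt{\log(2+|j_l|)\log(2+|k_l|)}$. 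This case is the main obstacle: combining the conditional convergence of the LePage series, the residual dependence among the $\epsilon_{J,K}$ for distinct $J$ (their supports $A_J$ overlap whenever $|j_l-j_l'|\le 1$ for every $l$), and the stationary-in-$K$ structure, while preserving the sharp logarithmic exponent in (\ref{eq2:dub-espJK}), requires a careful use of maximal inequalities for non-Gaussian stable fields.
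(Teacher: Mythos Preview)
Your overall strategy --- the LePage representation of the joint process $\{\epsilon_{J,K}\}$ --- is exactly the approach the paper uses (the authors omit the details and point to Lemma~2.7 of Ayache and Boutard (2017), which proceeds via LePage). Your treatment of Part~(i) is essentially correct: for $\alpha\in(0,1)$ the series $\sum_n\Gamma_n^{-1/\alpha}$ converges absolutely a.s., the $K$-dependence sits entirely in unimodular phases, and a Borel--Cantelli argument over the countable family $J\in\Z^N$ produces the factor $\prod_l(1+|j_l|)^{1/\alpha+\eta}$ with no $K$-dependence, as claimed.

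The gap is in Part~(ii). The route you sketch --- moments of order $p<\alpha$ plus a maximal inequality for the stable sequence in $K$, then Borel--Cantelli --- will not deliver the exponent $\tfrac12$ in the logarithms. Moments of order $p<\alpha$ give tail bounds of the form $\P(|\epsilon_{J,K}|>u)\le c\,u^{-p}$, and summing these over $(J,K)$ via Borel--Cantelli produces factors like $(\log(2+|k_l|))^{1/p+}$ with $1/p>1/\alpha\ge 1/2$, which is strictly worse than $\sqrt{\log(2+|k_l|)}$. The missing idea is that, \emph{conditionally on $(\Gamma_n,V_n)_{n\ge1}$}, the LePage series is a sum of bounded independent symmetric (complex Rademacher-type) variables $g_n$, hence a sub-Gaussian random variable with conditional variance
\[
\sigma_{J,K}^2=\sum_{n\ge1}\Gamma_n^{-2/\alpha}\,\phi^{2/\alpha}(V_n)\,\bigl|\widehat{\Psi}_{\alpha,J,K}(V_n)\bigr|^2,
\]
and this conditional variance does not depend on $K$ (again the $K$-dependence is only in phases). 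One first controls $\sigma_{J,K}$ uniformly in $K$ by a Borel--Cantelli argument on the Poisson data over $J\in\Z^N$, obtaining the factor $\prod_l(1+|j_l|)^{1/\alpha+\eta}$; then one applies the conditional sub-Gaussian tail bound together with Borel--Cantelli over $(J,K)\in\Z^N\times\Z^N$, which is precisely what generates the $\sqrt{\log(2+|j_l|)}\sqrt{\log(2+|k_l|)}$ factors. This conditional-sub-Gaussian step is the mechanism used in Ayache and Boutard (2017), and it is what makes the case $\alpha\ge1$ go through with the stated sharp logarithmic exponents; your moment/maximal-inequality substitute would give a weaker bound.
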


The proof of Proposition~\ref{prop:estim-randcoeff} mainly relies on a
LePage series representation of the complex-valued S$\a$S process
$\big\{\epsilon_{J,K}\,:\,(J,K)\in\Z^N\times\Z^N\big\}$. We skip it
since it is similar to that of Lemma~2.7 in
Ayache and Boutard (2017).

\bigskip
\section{Uniform Hausdorff dimension result for the inverse images } 
\label{sec:Geo}
 
Let  $X= \{X(t), t \in \R^N\}$ be an $(N, d)$ harmonizable fractional
stable sheet  defined in (\ref{def:X}). For any Borel set $F \subset \R^d$, 
we define the inverse image $X^{-1}(F)$ by
$$X^{-1}(F) = \{t \in (0, \infty)^N: X(t) \in F\}.$$
Notice that we have avoided the boundary of $\R_+^N$ on which 
$X(t) \equiv 0$ a.s. This causes little loss of generality.
%For definition and basic properties of Hausdorff dimension,
%we refer to Falconer (1990), or Khoshnevisan (2002).

When $\alpha = 2$ (i.e., $X$ is a fractional Brownian sheet in $\R^d$)
and $F$ is fixed, Bierm\'e, Lacaux and Xiao (2009, Theorem 2.3) proved
the following result on the Hausdorff dimension of $X^{-1}(F)$:
\begin{equation}\label{Eq:L-infty}
\big\|\dim X^{-1}(F) \big\|_{L^\infty(\P)} =
\min_{1 \le k \le N} \bigg\{ \sum_{j=1}^k
\frac{H_k} {H_j} + N-k - H_k \big(d- \dim F \big)\bigg\},
\end{equation}
where for any function $Y: \Omega \to \R_+$,
$\|Y\|_{L^\infty(\P)}$ is defined as
\[
\|Y\|_{L^\infty(\P)}=
\sup\big\{\theta: Y \ge \theta \ \hbox{ on an event $E$ with }\, \P(E) > 0\big\}.
\]
Observe that in \eqref{Eq:L-infty}, the probability $\P\{\dim X^{-1}(F) > 0\}$ depends on $F$.
In Remark 2.8 in Bierm\'e, Lacaux and Xiao (2009), they asked the following question: If 
$\sum_{j=1}^N  1 /H_j > d$, does there exist a single event $\Omega_1 \subseteq \Omega$ 
of positive probability such that on $\Omega_1$,
\[
\dim X^{-1}(F) = \min_{1 \le k \le N} \bigg\{ \sum_{j=1}^k
\frac{H_k} {H_j} + N-k - H_k \big(d- \dim F \big)\bigg\}
\]
holds for \emph{all} Borel sets $F \subseteq \R^d$?  This is referred to as a uniform 
Hausdorff dimension problem for the inverse images of $X$.

Our objective of this section is to solve this problem for the $(N, d)$ harmonizable fractional
$\a$-stable sheet $X$ with $ \a \in [1, 2]$.  Our main result  is Theorem \ref{Th:IMdim2} below. 

%In the special case of fractional Brownian sheets (i.e., $\alpha = 2$), it answers 
%a question raised in %For a Borel set $F \subset \R^d$, we define the inverse image of $F$ under $X$ by
%\[
%X^{-1}(F) = \{t \in (0, \infty)^N: X(t) \in F\}.
%\]
%Notice that we have avoided the boundary of $\R_+^N$ where $X(t) \equiv 0$ a.s. 
%This causes little loss of generality.   

%\begin{equation}\label{Eq:L-infty}
%\big\|\dim \big(X^{-1}(F)\cap I\big)\big\|_{L^\infty(\P)} =
%\min_{1 \le k \le N} \bigg\{ \sum_{j=1}^k
%\frac{H_k} {H_j} + N-k - H_k \big(d- \dim F \big)\bigg\}.
%\end{equation}
%Here and in the sequel, for any function $Y: \Omega \to \R_+$,
%$\|Y\|_{L^\infty(\P)}$ is defined as
%\[
%\|Y\|_{L^\infty(\P)}=
%\sup\big\{\theta: Y \ge \theta \ \hbox{ on an event $E$ with }\, \P(E) > 0\big\}.
%\]
%When $F \subset \R^d$ is a fixed Borel set, the Bierm\'e, Lacaux and Xiao
%(2009) determined $\dim X^{-1}(F)$ for certain class of anisotropic
%Gaussian random fields $X$ including fractional Brownian sheets. As
%we can see from the proof of Theorem \ref{Th:IMdim2} that ,

\subsection{Some preliminaries results}
\label{Sec:Pre}

Let us collect some known results on harmonizable
fractional stable sheets, which will be useful for later sections.

For any $n \ge 1$ and  $t^1, \ldots, t^n \in \R^N$, the characteristic
function of the joint distribution of $Z^H(t^1), \ldots, Z^H(t^n)$ is
given by
\begin{equation}\label{Eq:ChF1}
\E\exp\bigg(i \sum_{j=1}^n u_j Z^H(t^j)\bigg) = \exp\bigg(- \Big\|
\sum_{j=1}^n u_j F(t^j, \cdot)\Big\|_{\a}^\a\bigg),
\end{equation}
where $u_j \in \R$ ($1 \le j\le n$),  $F(t, \lambda)$ is the function in
(\ref{eq:FKHFSS}). Recall that for every  $f \in L^\a (\R^N)$, $\|f \|_\a$
denotes $\|f\|_{L^\a (\R^N)}$.

It follows from \eqref{Eq:ChF1} that the scale parameter of the S$\alpha$S
random variable $\sum_{j=1}^n u_j Z^H(t^j)$ is
\begin{equation}\label{Eq:ScalePa}
\bigg\|\sum_{j=1}^n u_j Z^H(t^j)\bigg\|_\a := \bigg\|
\sum_{j=1}^n u_j F(t^j, \cdot)\bigg\|_{\a}.
\end{equation}
This allows us to describe some probabilistic properties of $Z^H$ by
the analytic properties of the functions $F(t, \cdot)$ and the
geometric structures of the space $L^\a (\R^N)$.
%for the scale parameter of $\sum_{j=1}^n u_j Z^H(t^j)$.

%For studying local times of stable random fields, the property of local nondeterminism studied by
%Nolan (1989) is useful. Stronger results and extensions were obtained in Xiao (2006, 2011).

Lemma \ref{Lem:SLND} is proved in Xiao (2011) for $\alpha \in [1, 2)$
and in Wu and Xiao (2007) for $\a = 2$. Part (i) shows that for all $0 < a < b$
and $s, t \in [a, b]^N$ the scale parameter of $Z^{H}(s) - Z^{H}(t)$ is comparable with
$\rho(s, t)$, which is the metric $\rho$ on $\R^N$ defined by
\begin{equation}\label{Def:metric}
\rho(s, t) = \sum_{j=1}^N |s_j - t_j|^{H_j},\qquad \forall\, s, t
\in \R^N.
\end{equation}
Part (ii) says that $Z^{H}$ has the property of sectorial local nondeterminism.
%which extends an analogous result for fractional Brownian sheets in .

\begin{lemma}\label{Lem:SLND}
Suppose $\alpha \in [1, 2]$ and $0 < a < b$ are constants. Then  there exist  
constants $c_{6}\ge 1$ and $c_{7} > 0$, depending on $a, \, b, \, H$ and $N$ only,
such that  the following properties hold:
\begin{itemize}
\item[(i)]\ For  all $s, t \in [a, b]^N$.
 \begin{equation} \label{eq:scale}
c_{6}^{-1}\   \rho(s, t) \le \big\|Z^{H}(s) - Z^{H}(t)
\big\|_\a \le c_{6}\ \rho(s,t).
\end{equation}

\item[(ii)]\ For all positive integers
$n \geq 2$ and all $ t^1, \ldots, t^n$ $\in [a, b]^N$, we have
\begin{equation} \label{eq:slnd-HFSS}
\Big\|Z^{H}(t^n)\, \big|\,   Z^H(t^1), \ldots ,
 Z^H(t^{n-1}) \Big\|_\a \ge c_{7}\
         \sum_{j=1}^N \min_{0 \le k \le n-1}
         \big| t^n_j - t^k_j \big|^{H_j},
\end{equation}
where $\big\|Z^{H}(t^n)\, \big|\,   Z^H(t^1), \ldots ,
 Z^H(t^{n-1}) \big\|_{\a}$ is the $L^\a(\R^N)$-distance from $F(t^n, \cdot)$
 to the subspace generated by $F(t^j, \cdot)$ ($j = 0, 1, \ldots, t^{n-1}$).
\end{itemize}
\end{lemma}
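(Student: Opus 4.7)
The plan is to reduce both parts of the lemma to the deterministic $L^\alpha(\R^N)$-geometry of the kernel $F(t,\lambda)=\prod_{l=1}^N f_{H_l}(t_l,\lambda_l)$ via the identity (\ref{Eq:ScalePa}), and then to deploy Fourier-analytic arguments in $\lambda$-space. As a preliminary, I would check that $f_{H_l}(t_l,\cdot)\in L^\alpha(\R)$ for every $t_l$ and every $l$: the zero of $e^{it_l\lambda}-1$ at the origin cancels the singularity of $|\lambda|^{-H_l-1/\alpha}$, and $\alpha H_l+1>1$ ensures integrability at infinity. Hence $F(t,\cdot)\in L^\alpha(\R^N)$ for every $t\in\R^N$, and the whole problem becomes one of estimating $L^\alpha$-distances between explicit tensor-product kernels.

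\emph{Part (i).} For the upper bound in (\ref{eq:scale}), I would write the telescoping identity
$$F(s,\lambda)-F(t,\lambda)=\sum_{l=1}^N\Big[\prod_{m<l}f_{H_m}(s_m,\lambda_m)\Big]\big[f_{H_l}(s_l,\lambda_l)-f_{H_l}(t_l,\lambda_l)\big]\Big[\prod_{m>l}f_{H_m}(t_m,\lambda_m)\Big].$$
The factorization of $L^\alpha(\R^N)$-norms across tensor products, combined with the scaling identity $\|f_{H_l}(s_l,\cdot)-f_{H_l}(t_l,\cdot)\|_{L^\alpha(\R)}=c\,|s_l-t_l|^{H_l}$ (obtained from $|f_{H_l}(s_l,\lambda)-f_{H_l}(t_l,\lambda)|=|e^{i(s_l-t_l)\lambda}-1|/|\lambda|^{H_l+1/\alpha}$ by the dilation $\lambda\mapsto\lambda/|s_l-t_l|$) and the uniform bound $\|f_{H_m}(u,\cdot)\|_{L^\alpha(\R)}\le c(a,b)$ for $u\in[a,b]$, yields $\|F(s,\cdot)-F(t,\cdot)\|_\alpha\le c_6\,\rho(s,t)$. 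The lower bound in (\ref{eq:scale}) is the special case $n=2$ of part (ii) once (ii) is established, so I would simply deduce it from (ii) rather than proving it independently.

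\emph{Part (ii).} This is the main obstacle. Set $r_l:=\min_{0\le k\le n-1}|t^n_l-t^k_l|^{H_l}$ and note that the conditional scale parameter on the left of (\ref{eq:slnd-HFSS}) equals $\inf_{c_1,\ldots,c_{n-1}\in\R}\|F(t^n,\cdot)-\sum_{j<n}c_jF(t^j,\cdot)\|_\alpha$. By $L^\alpha$--$L^{\alpha'}$ duality, which is available because $\alpha\in[1,2]$ gives $\alpha'\in[2,\infty]$, it suffices to construct for each coordinate index $l$ a test function $\phi^{(l)}\in L^{\alpha'}(\R^N)$ with $\|\phi^{(l)}\|_{\alpha'}\le c$ such that
$$\Big\langle F(t^n,\cdot)-\sum_{j=1}^{n-1}c_jF(t^j,\cdot),\,\phi^{(l)}\Big\rangle\ge c_7\,r_l$$
uniformly in $c_1,\ldots,c_{n-1}$. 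Exploiting the product structure of $F$, I would choose $\phi^{(l)}(\lambda)=\prod_m\phi^{(l)}_m(\lambda_m)$ in which $\phi^{(l)}_l$ is localized in a frequency annulus $|\lambda_l|\sim r_l^{-1/H_l}$ — precisely the scale at which $e^{it^n_l\lambda_l}$ decorrelates from $e^{it^k_l\lambda_l}$ for the nearest $t^k_l$ — while the factors $\phi^{(l)}_m$ with $m\ne l$ are concentrated near the origin so that the contributions from the other coordinates of the $t^j$ and $t^n$ are effectively frozen into phase factors absorbable by the coefficients. Oscillatory averaging then annihilates the span of $\{F(t^j,\cdot)\}_{j<n}$ up to an acceptable error, whereas $\langle F(t^n,\cdot),\phi^{(l)}\rangle$ produces a contribution of order $r_l$. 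Summing the $N$ resulting estimates gives the right-hand side of (\ref{eq:slnd-HFSS}).

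The principal technical difficulty — and the reason that only the sectorial, rather than the stronger coordinate-coupled, form of LND is accessible here — is the simultaneous near-annihilation of the entire $(n-1)$-dimensional subspace spanned by $\{F(t^j,\cdot)\}_{j<n}$ by a product-type $\phi^{(l)}$ adapted to a single coordinate $l$. The Gaussian case $\alpha=2$ of Wu and Xiao (2007) reduces this to an $L^2$-orthogonality computation via Plancherel. The extension to $\alpha\in[1,2)$ in Xiao (2011) requires quantitative $L^{\alpha'}$-control of the bump constructions and of the oscillatory cancellations; this replacement of Hilbert-space orthogonality by genuine $L^{\alpha'}$-duality estimates is the technical heart of the argument.
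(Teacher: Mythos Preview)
The paper does not give its own proof of this lemma: it simply records that the result is proved in Xiao (2011) for $\alpha\in[1,2)$ and in Wu and Xiao (2007) for $\alpha=2$, and then uses it as a black box. So there is no detailed in-paper argument to compare against; any self-contained proof you supply is going beyond what the paper does.

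Your treatment of part (i) is sound. The telescoping identity together with the one-variable scaling $\|f_{H_l}(s_l,\cdot)-f_{H_l}(t_l,\cdot)\|_{L^\alpha(\R)}=c\,|s_l-t_l|^{H_l}$ and the uniform bound $\|f_{H_m}(u,\cdot)\|_{L^\alpha(\R)}\asymp |u|^{H_m}$ on $[a,b]$ gives the upper half of (\ref{eq:scale}). Deducing the lower half from (ii) is also legitimate: with $n=2$ the right side of (\ref{eq:slnd-HFSS}) is $\sum_j\min(|t_j-s_j|^{H_j},|t_j|^{H_j})$, and on $[a,b]^N$ one has $|t_j|\ge a$ while $|t_j-s_j|\le b-a$, so this minimum is at least a constant (depending on $a,b,H$) times $|t_j-s_j|^{H_j}$.

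Your sketch of part (ii), however, has a genuine gap at the step you yourself flag as ``the principal technical difficulty''. For the duality bound
\[
\inf_{c_1,\ldots,c_{n-1}}\Big\|F(t^n,\cdot)-\sum_{j<n}c_jF(t^j,\cdot)\Big\|_\alpha
\ \ge\ \frac{\big|\langle F(t^n,\cdot),\phi^{(l)}\rangle\big|}{\|\phi^{(l)}\|_{\alpha'}}
\]
to hold, the test function $\phi^{(l)}$ must lie in the \emph{exact} annihilator of $\mathrm{span}\{F(t^j,\cdot):j<n\}$; ``oscillatory averaging \ldots\ up to an acceptable error'' is not enough, because the residual pairings $\langle F(t^j,\cdot),\phi^{(l)}\rangle$ are multiplied by the free coefficients $c_j$, which are unbounded. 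A tensor-product $\phi^{(l)}=\prod_m\phi^{(l)}_m$ generically cannot annihilate all of the $F(t^j,\cdot)$ simultaneously: the pairing factors as $\prod_m\langle f_{H_m}(t^j_m,\cdot),\phi^{(l)}_m\rangle$, and making one factor vanish kills the pairing with $F(t^n,\cdot)$ as well unless the vanishing is tied to the specific $t^j_m$, which differs across $j$. The phrase ``absorbable by the coefficients'' does not rescue this, since absorbing an approximately constant nonzero factor into $c_j$ only reparametrises the infimum --- it does not eliminate the error term.

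To close the gap one needs either (a) a test function that exactly annihilates the subspace, which forces one to leave the product-type ansatz, or (b) an a priori bound on the minimising coefficients $c_j^*$, which is itself essentially an LND-type statement and cannot be assumed. The arguments in the cited references proceed by working directly with lower bounds for $\int_{\R^N}|F(t^n,\lambda)-\sum_j c_jF(t^j,\lambda)|^\alpha\,d\lambda$ over carefully chosen frequency regions, rather than through a single dual pairing; your outline does not yet supply this mechanism.
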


\begin{remark}\label{Re:LND}
We believe that (\ref{eq:slnd-HFSS}) still holds if $\a \in (0, 1)$,
but we have not been able to prove this. The Fourier analytic method
in Ayache and Xiao (2016) works well for proving the property of local
nondeterminism for harmonizable fractional $\a$-stable fields with
stationary increments for all $\a \in (0,\,2)$. Unfortunately, the last
part of the proof of Theorem 2.1 in Ayache and Xiao (2016) breaks down
for harmonizable fractional stable sheets.
\end{remark}

In order to make use of the property of sectorial local
nondeterminism, we need the following useful lemma from Xiao
(2011). The case of $\a= 2$ is included for completeness and in this case
it can be shown that $c(n)$ does not depend on $n$.
\begin{lemma}\label{Lem:PG-th}
Assume $ \a \in [1,  2]$ and $0 < a < b$ are constants. For all integers 
$n \ge 2$ there exists a positive constant $c(n)$ such that for all $t^1,\ldots, t^n \in
[a, b]^N$ and $u_1, \ldots, u_n \in \R$,
\begin{equation}\label{Eq:PG1}
\begin{split}
\bigg\|\sum_{j=1}^n u_j Z^H(t^j)\bigg\|_\a &\ge c(n)
\bigg(|v_1| \big\|Z^H(t^1)\big\|_\a \\
&\qquad \qquad \qquad + \sum_{j=2}^n |v_{j}|\,
\big\|Z^H(t^j)\big|Z^H(t^1), \ldots, Z^H(t^{j-1})\big\|_\a\bigg).
\end{split}
\end{equation}
In the above,
\begin{equation}\label{Eq:Transf-1}
(v_{1}, \ldots, v_{n}) = (u_1,\ldots, u_n)\, A,
\end{equation}
where $A= (a_{i j})$ is an $n\times n$ lower triangle matrix (which
depends on $t^1, \ldots, t^n$) with $a_{i i} = 1$ for all $1 \le i
\le n$.
\end{lemma}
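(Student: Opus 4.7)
The plan is to construct the matrix $A$ via successive $L^\alpha$-best approximations and then establish the lower bound by induction on $n$, with the (genuine) triangle inequality playing the crucial role.

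First I would set $h_1 := F(t^1, \cdot)$ and, for $j \ge 2$, define $h_j := F(t^j, \cdot) - p_j$, where $p_j$ is a best $L^\alpha$-approximation of $F(t^j, \cdot)$ in the finite-dimensional subspace $S_j$ generated by $F(t^1, \cdot), \ldots, F(t^{j-1}, \cdot)$. A minimizer $p_j$ exists because closed bounded sets in the finite-dimensional space $S_j$ are compact, and by the definition of the conditional scale parameter one has $\|h_j\|_\alpha = \|Z^H(t^j)\,|\,Z^H(t^1), \ldots, Z^H(t^{j-1})\|_\alpha$. Since $h_j$ lies in the span of $F(t^1, \cdot), \ldots, F(t^j, \cdot)$ with coefficient $1$ on $F(t^j, \cdot)$, inverting this triangular change of basis produces scalars $b_{ji}$ (for $i < j$) such that
\[
F(t^j, \cdot) = h_j + \sum_{i < j} b_{ji}\, h_i.
\]
Setting $a_{jj} = 1$, $a_{ij} = b_{ij}$ for $i > j$, and $a_{ij} = 0$ otherwise produces a lower triangular matrix $A = (a_{ij})$ with unit diagonal, and a direct expansion shows that the row vector $(v_1, \ldots, v_n) = (u_1, \ldots, u_n)A$ satisfies the identity $\sum_{j=1}^n u_j F(t^j, \cdot) = \sum_{i=1}^n v_i h_i$. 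It therefore suffices to prove that $\|\sum_{i=1}^n v_i h_i\|_\alpha \ge c(n) \sum_{i=1}^n |v_i|\,\|h_i\|_\alpha$.

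The key structural fact driving the induction is the optimality property of $h_j$: for every $q \in S_j$ one has
\[
\|h_j + q\|_\alpha = \|F(t^j, \cdot) - (p_j - q)\|_\alpha \ge \|F(t^j, \cdot) - p_j\|_\alpha = \|h_j\|_\alpha,
\]
because $p_j - q$ still lies in $S_j$ and $p_j$ is a minimizer. Applied with $j = n$, after factoring out $v_n$ when $v_n \neq 0$, this yields $\|\sum_i v_i h_i\|_\alpha \ge |v_n|\,\|h_n\|_\alpha$. Combining this with the reverse-triangle bound $\|\sum_i v_i h_i\|_\alpha \ge \|\sum_{i < n} v_i h_i\|_\alpha - |v_n|\,\|h_n\|_\alpha$ (valid because $\alpha \ge 1$), and applying the inductive hypothesis to $\|\sum_{i<n} v_i h_i\|_\alpha$, then averaging the two lower bounds, yields the desired inequality for $n$ with a recursively defined constant $c(n) = \tfrac12 \min(c(n-1)/2,\,1)$; the base case $n = 2$ is a direct two-line computation giving, e.g., $c(2) = 1/4$.

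The main obstacle, and the reason for the restriction $\alpha \in [1, 2]$, is that the reverse-triangle step in the induction fundamentally relies on the genuine triangle inequality in $L^\alpha(\R^N)$. For $\alpha \in (0, 1)$ only the quasi-triangle inequality $\|f+g\|_\alpha^\alpha \le \|f\|_\alpha^\alpha + \|g\|_\alpha^\alpha$ is available, which no longer produces a useful lower bound on $\|\sum_i v_i h_i\|_\alpha$ from that of its head truncation; this limitation is of the same flavor as the one noted in Remark~\ref{Re:LND} about extending sectorial local nondeterminism to $\alpha < 1$. A minor point is that only \emph{existence} of the best approximation $p_j$ is needed, so the lack of uniqueness when $\alpha = 1$ causes no difficulty.
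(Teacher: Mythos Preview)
Your argument is correct. The paper itself does not prove this lemma but simply quotes it from Xiao (2011), so there is no in-paper proof to compare against; your construction of the $h_j$'s via best $L^\alpha$-approximation (which realizes the conditional scale parameter as $\|h_j\|_\alpha$), together with the optimality inequality $\|h_n+q\|_\alpha\ge\|h_n\|_\alpha$ for $q\in S_n$ and the reverse triangle inequality, is exactly the standard route to such ``Pythagorean-type'' lower bounds in $L^\alpha$, $\alpha\ge1$, and is in the same spirit as the argument in the cited reference. Your remarks on why $\alpha\ge1$ is needed and why non-uniqueness of the best approximation at $\alpha=1$ is harmless are also on point.
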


\begin{remark}
Roughly speaking, in (\ref{Eq:PG1}) we expand
$\big\|\sum_{j=1}^n u_j Z^H(t^j)\big\|_\a$ by repeatedly
``conditioning'' $Z^H(t^j)$, given $Z^H(t^1), \ldots,
Z^H(t^{j-1})$. Moreover, this ``conditioning'' can be done in an
arbitrary order of the random variables $Z^H(t^1), \ldots,
Z^H(t^n)$. This observation will be useful for studying regularity
properties of the local times of $Z^H$ below.
%will be useful
%in Section \ref{sec:LT}.
\end{remark}

The sample function of $Z^H$ is  continuous and has
the following uniform modulus of continuity, which was proved in Xiao
(2010). A similar result for operator-scaling stable random fields
with stationary increments was proved by Bierm\'e and Lacaux
(2009, 2015).

\begin{lemma}\label{Lem:unimod}
Let $Z^H=\{Z^H(t),\, t \in \R^N\}$ be a real-valued harmonizable
fractional stable sheet defined by (\ref{eq:HFSS-1})  Then for any
constants  $0 \le a < b$ and 
$\ep > 0$, one has
\begin{equation} \label{eq:mod-cont}
\lim_{h \to 0} \sup_{\stackrel{s, t\in [a, b]^N} {|s-t|\le h}}\frac{|{Z}^H(s)-
{Z}^H(t)|} {\rho(s, t)  \big|\log
\rho(s, t)\big|^{1/\a+\ep}}= 0,
 \qquad \hbox{ a.s.}
\end{equation}
\end{lemma}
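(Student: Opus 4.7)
The plan is to combine the scale parameter estimate for increments from Lemma~\ref{Lem:SLND}(i), the polynomial tail of S$\a$S variables, and a chaining argument over a $\rho$-adapted dyadic net of $[a,b]^N$. Fix $\ep>0$ and work on the almost sure event on which $Z^H$ has a continuous version, supplied by Theorem~\ref{thm:unif-conv-wav}.

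First I would record the one-point tail estimate. For any $s,t\in[a,b]^N$, Lemma~\ref{Lem:SLND}(i) gives $\|Z^H(s)-Z^H(t)\|_\a\le c_6\rho(s,t)$. Since $Z^H(s)-Z^H(t)$ is an S$\a$S random variable, the polynomial tail then yields
$$\P\bigl(|Z^H(s)-Z^H(t)|\ge u\,\rho(s,t)\bigr)\le c_\a u^{-\a},\qquad u>0.$$
Next I would introduce a dyadic $\rho$-net $\mathcal{T}_n$ of $[a,b]^N$ at resolution $2^{-n}$. For each $s\in[a,b]^N$ pick a chain $\{\pi_k(s)\}_{k\le n}$ with $\pi_k(s)\in\mathcal{T}_k$ and $\rho(s,\pi_k(s))\le 2^{-k}$; apply the tail bound above at each link of the chain with threshold $u_k=k^{1/\a+\ep}$, and combine with Borel--Cantelli to obtain, almost surely, $|Z^H(\pi_{k+1}(s))-Z^H(\pi_k(s))|\le c\,2^{-k}k^{1/\a+\ep}$ for all $k$ large enough and all $s$. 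Summing this geometric-logarithmic series in $k$, doing the symmetric estimate for $t$, and adding a direct bound on the central pair $(\pi_n(s),\pi_n(t))$ gives
$$|Z^H(s)-Z^H(t)|\le C(\omega)\,\rho(s,t)\,|\log\rho(s,t)|^{1/\a+\ep}$$
uniformly for $s,t\in[a,b]^N$ with $|s-t|$ small.

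The main obstacle is sharpness of the logarithmic exponent. A crude union bound over all pairs in $\mathcal{T}_n$ produces $\sim 2^{nQ}$ terms, with $Q=\sum_l 1/H_l$, and hence would require an exponentially large threshold rather than the polynomial $u_n=n^{1/\a+\ep}$ needed here. The remedy is a generic-chaining argument of Talagrand type adapted to the $\rho$-metric: at each scale one pays only a logarithmic metric-entropy cost for chaining between consecutive net points, which together with the polynomial tail of the S$\a$S increments keeps $u_n$ polynomial in $n$. The almost sure uniform wavelet representation of Theorem~\ref{thm:unif-conv-wav}, bounded through the coefficient estimate of Proposition~\ref{prop:estim-randcoeff} and the Schwartz decay of $\psi^{H_l}$, controls the residual sub-mesh tail uniformly in $s,t$ and patches together the chaining limit. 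Since $\ep>0$ is arbitrary, the $\limsup$ in the statement of the lemma is $0$ almost surely.
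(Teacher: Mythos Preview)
The paper does not prove this lemma itself; it quotes the result from Xiao (2010). That said, your sketch has a real gap at the point you yourself flag. You correctly observe that a union bound over the $\sim 2^{nQ}$ links at scale $2^{-n}$, combined with the S$\a$S tail $\P(|Z^H(s)-Z^H(t)|\ge u\,\rho(s,t))\le c_\a u^{-\a}$, forces thresholds $u_n\sim 2^{nQ/\a}$ rather than $u_n\sim n^{1/\a+\ep}$. But the remedy you propose, ``generic chaining of Talagrand type'', does not repair this: generic chaining replaces Dudley's entropy integral by the $\gamma$-functional, yet still relies on concentration of the increments. For a process with increment tail $\phi(u)$ the cost at level $n$ is governed by $\phi^{-1}(2^{-n})$, which for $\phi(u)=u^{-\a}$ equals $2^{n/\a}$, again exponential in $n$. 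Heavy polynomial tails are precisely the regime in which chaining from raw increment tails cannot produce a logarithmic modulus; indeed $\max_{i\le M}|X_i|\asymp M^{1/\a}$ for i.i.d.\ S$\a$S variables, not $(\log M)^{1/\a}$. The logarithmic exponent in \eqref{eq:mod-cont} is not a consequence of the marginal tails of $Z^H$ but of the structure of the harmonizable representation.

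The proofs in Xiao (2010) and Bierm\'e--Lacaux (2009, 2015) exploit that structure \emph{before} chaining. One route is the LePage representation: conditionally on the Poisson arrivals the field is sub-Gaussian, and a Dudley-type bound in the metric $\rho$ then gives the logarithmic modulus with a random constant whose size is controlled by the arrival sequence. A second route is already available in this paper: Theorem~\ref{thm:unif-conv-wav} and Proposition~\ref{prop:estim-randcoeff} give almost-sure \emph{pointwise} bounds $|\epsilon_{J,K}|\le C(\omega)\prod_l(1+|j_l|)^{1/\a+\eta}(\cdots)$, after which the modulus of continuity follows from a purely deterministic estimate on the wavelet series, splitting each coordinate sum at the critical scale $2^{j_l}|s_l-t_l|\approx 1$. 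In your outline you invoke Theorem~\ref{thm:unif-conv-wav} and Proposition~\ref{prop:estim-randcoeff} only as a ``patch'' for the sub-mesh residual after the chaining step; in fact they (or the LePage conditioning) have to carry the entire argument, because the chaining step as written does not go through.
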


\subsection{Local times}
\label{Sec:LT}

Many geometric properties of a random field are related to the existence and
analytic properties of its local times. We recall briefly some aspects
of the theory of local times. For further information we refer to Geman and
Horowitz (1980) and Dozzi (2002).

Let $X: \R^N \to \R^d$ be a (deterministic or stochastic) Borel vector field.
For any Borel set $T \subseteq \R^N$, the occupation measure of $X$
on $T$ is defined as
\[
\mu_{_T}(\cdot) = \la_N \big\{ t \in T: X(t) \in \cdot \big\},
\]
which is a Borel measure on $\R^d$. If $\mu_{_T}$ is almost surely absolutely
continuous with respect to the Lebesgue measure $\la_d$, then $X(t)$
is said to have a \emph{local time} on $T$. The local time,
$L(\cdot, T)$, is defined as the Radon--Nikod\'ym derivative of
$\mu_{_T}$ with respect to $\lambda_d$, i.e.,
\[
    L(x,T) = \frac{d\mu_{_T}} {d\lambda_d}(x),\qquad \forall x\in\R^d.
\]
In the above, $x$ is called the \emph{space variable}, and $T$ is
the \emph{time variable}. It is clear that if $X$ has  local times on $T$,
then for every Borel set $S \subseteq T$, $L(x, S)$ also exists.
It follows from Theorems 6.3 and 6.4 in Geman and Horowitz (1980)
that the local times of $X$ have a version, still denoted by $L(x,
T)$, such that it is a kernel in the following sense:
\begin{itemize}
\item[(i)]\ For each fixed $S \subseteq T$, the function $x
\mapsto L(x, S)$ is Borel measurable in $x\in \R^d$.
\item[(ii)]\
For every $x \in \R^d$, $L(x, \cdot)$ is a Borel measure on
${\EuScript B}(T)$, the family of Borel subsets of $T$.
\end{itemize}
Moreover, $L(x, T)$ satisfies the following \emph{occupation density
formula}: For every measurable function $f : \R^d \to \R_+$,
\begin{equation}\label{Eq:occupation}
\int_T f(X(t))\, d t = \int_{\R^d} f(x) L(x, T)\, dx.
\end{equation}

Suppose we fix an interval $T = \prod_{i=1}^N [a_i, a_i+h_i]$ in
$\mathcal{A}$. If  we can choose a version of the local time, still
denoted by $L\big(x, \prod_{i=1}^N [a_i, a_i+t_i]\big)$, such that
it is a  continuous function of $(x, t_1, \cdots, t_N)$ $\in$
$\R^d\times \prod_{i=1}^N[0,h_i]$, then $X$ is said to have a
\emph{jointly continuous local time} on $T$.
%Hence local times are useful in
%studying fractal properties of level sets and inverse images of
%the vector field $X$. See, earlier papers of Geman and Horowitz
%(1980) and Monrad and Pitt (1987), and recent one of Xiao (2007a).

The following result follows from Lemmas \ref{Lem:SLND} and
\ref{Lem:PG-th} above and Theorem 4.2 in Xiao (2011).

\begin{lemma}\label{Lem:JC}
Let $X= \{X(t), t \in \R^N\}$ be an $(N, d)$ harmonizable fractional
stable sheet defined by (\ref{def:X}).  If $\a \in [1, 2]$ and
$\sum_{j=1}^N 1/{H_{j}} > d$, then for any bounded interval $T
\subseteq (0, \infty)^N$, $X$ has a jointly continuous local time on $T$
almost surely.
\end{lemma}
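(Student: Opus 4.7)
The plan is to follow the standard Fourier-analytic route to existence and joint continuity of local times, originally developed by Berman and extended to multi-parameter stable processes by Nolan and later by Xiao. First I would fix a closed interval $T\subset(0,\infty)^N$, represent $L(x,T)$ formally via Fourier inversion, and use the independence of the $d$ coordinate fields $X_1,\dots,X_d$ (each a copy of $Z^H$) to reduce all moment estimates to products of $L^\alpha$-norms of linear combinations of $F(t^j,\cdot)$. Concretely, for each integer $n\ge 1$ and $x,y\in\R^d$ and $0<\gamma<\min\{1,(\sum_{j=1}^N 1/H_j - d)/2\}$, one aims to bound
\begin{equation*}
\E\!\left[\bigl(L(x,T)-L(y,T)\bigr)^{n}\right]
\;\le\; c(n)\,|x-y|^{n\gamma}\!\int_{T^n}\!\int_{(\R^d)^n}
\prod_{k=1}^n |u_k|^{\gamma}\,\exp\!\Bigl(-\bigl\|{\textstyle\sum_{k} u_k\cdot X(t^k)}\bigr\|_\alpha^\alpha\Bigr)\,du\,dt,
\end{equation*}
together with an analogous time-increment bound, and then invoke the multi-parameter Kolmogorov continuity criterion (as formulated in Xiao 2011, Theorem 4.2).

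The heart of the matter is to bound the exponential factor from above by a product of one-dimensional Gaussian-type (in fact $\alpha$-stable) decays, one per time point $t^k$. This is precisely where Lemmas~\ref{Lem:SLND} and \ref{Lem:PG-th} are used in tandem. Given $t^1,\dots,t^n\in T$, I first apply Lemma~\ref{Lem:PG-th} to obtain
\[
\Bigl\|\sum_{k=1}^n u_k\cdot X(t^k)\Bigr\|_\alpha^\alpha
\;\ge\; c(n)\sum_{k=1}^n |v_k|^\alpha\,
\bigl\|Z^H(t^k)\,|\,Z^H(t^1),\dots,Z^H(t^{k-1})\bigr\|_\alpha^\alpha,
\]
with $v_k$ a nonsingular triangular linear transform of $u_k$ (applied coordinatewise in $\R^d$). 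Lemma~\ref{Lem:SLND}(ii) then gives the sectorial lower bound $\sum_{j=1}^N\min_{0\le \ell<k}|t^k_j-t^\ell_j|^{H_j}$ for each conditional norm. Because Lemma~\ref{Lem:PG-th} allows the ``conditioning'' to be performed in \emph{any} prescribed order of the $t^k$, I may, for each coordinate $j\in\{1,\dots,N\}$ separately, reorder $t^1,\dots,t^n$ so that the $j$-th coordinates are monotone; this replaces the minima over previous indices by the distances to the nearest neighbor in that coordinate, which is the key combinatorial device for the multi-parameter moment integral.

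With the lower bound in hand, the $du$-integral becomes an $\alpha$-stable Gaussian-type integral over $\R^{dn}$ yielding a product of factors $\bigl(\sum_j\min_{\ell<k}|t^k_j-t^\ell_j|^{H_j}\bigr)^{-d-\gamma}$; after changing variables to the ``ordered gaps'' in each coordinate, the condition $\sum_{j=1}^N 1/H_j>d$ guarantees convergence of the resulting integral over $T^n$ and produces bounds of the correct factorial growth in $n$ (see Theorem 4.2 of Xiao 2011 for the precise combinatorial accounting). The space-increment factor $|x-y|^{n\gamma}$ is extracted via the elementary estimate $|e^{i\langle u,x\rangle}-e^{i\langle u,y\rangle}|\le 2^{1-\gamma}|u|^\gamma|x-y|^\gamma$. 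A parallel estimate for time increments $L(x,T^1)-L(x,T^2)$ (with the region of integration in $T^n$ shrunk to a slab) produces a bound with a prefactor that is a power of the Lebesgue measure of the symmetric difference. Finally, a multi-parameter Kolmogorov continuity theorem, applied in the joint variable $(x,t)\in\R^d\times T$, upgrades these moment bounds to almost sure joint continuity of a version of $L(x,T)$, completing the argument.

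The main obstacle is the coordinatewise reordering step: one must verify that the flexibility in Lemma~\ref{Lem:PG-th} of choosing the conditioning order, coupled with the anisotropic sectorial bound in Lemma~\ref{Lem:SLND}(ii), is strong enough to decouple the $N$ time-directions inside a single $n$-fold integral on $T^n$. Once that decoupling is achieved, everything reduces to $N$ independent one-dimensional gap integrals whose convergence is controlled exactly by the hypothesis $\sum_{j=1}^N 1/H_j>d$.
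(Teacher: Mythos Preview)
Your proposal is correct and follows essentially the same approach as the paper: the paper's proof simply cites Lemmas~\ref{Lem:SLND} and~\ref{Lem:PG-th} together with Theorem~4.2 in Xiao (2011), and your write-up is precisely an unpacking of how those three ingredients combine (Fourier representation of moments, the triangular change of variables from Lemma~\ref{Lem:PG-th}, the sectorial lower bound from Lemma~\ref{Lem:SLND}(ii), coordinatewise reordering, and Kolmogorov's criterion). There is nothing to add.
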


It is known from Adler (1981) that, when a local time is jointly
continuous, $L(x, \cdot)$ can be extended to be a finite Borel
measure supported on the level set
\begin{equation}\label{eq:inv}
X_T^{-1}(x) = \{ t \in T: X(t) = x\}.
\end{equation}
In order to use this fact for proving Theorem \ref{Th:IMdim2} below,
we also need to obtain the H\"older conditions for the local time
$L(x, \cdot)$ in the set variable.

For convenience we assume in the rest of this paper that
\begin{equation}
\label{Eq:H} 0 < H_1 \leq \dots \leq H_N < 1.
\end{equation}
Of course, there is no loss of generality in the arbitrary ordering
of $H_1,\dots, H_N$.

Assume that $d<\sum_{j=1}^N 1/{H_{j}}$ holds and let $\tau \in \{1,
\ldots, N\}$ be the integer such that
\begin{equation}\label{Eq:Hcon2}
\sum_{\ell=1}^{\tau-1} \frac{1}{H_{\ell}} \le d <\sum_{\ell=1}^\tau
\frac{1}{H_{\ell}}
\end{equation}
with the convention that $ \sum_{\ell=1}^{0} \frac{1}{H_{\ell}} :=
0$. We denote in the sequel
\begin{equation}\label{Eq:beta}
\beta_\tau = \sum_{\ell=1}^\tau \frac{H_\tau} {H_\ell} + N - \tau -
H_\tau d.
\end{equation}
As shown by Theorem 7.1 in Xiao (2009), for any $x \in \R^d$, we have
$\dim X^{-1}(x) = \beta_\tau$ with positive probability.
%Hence it is natural that $\beta_\tau$ is the H\"older exponent of the local time
%$L(x, \cdot)$ in the set variable.

%As we mention in Section 2
We will make use of the following uniform H\"older condition for
the local time $L(x, \cdot)$ in the set variable. We refer to Nolan
(1989), K\^ono and Shieh (1993), Xiao (2011) for earlier results
of this type for self-similar stable processes and harmonizable
fractional stable motion. The new feature of Proposition \ref{Th:Holder}
is that the uniform H\"older condition is given in terms of $(H_1,
\ldots, H_N)$ and the side-lengths of $I$, so it characterizes the
anisotropy of $X$, and the exponents of $r_\ell$ are almost optimal.

\begin{proposition}\label{Th:Holder}
Let $X= \{X(t), t \in \R^N\}$ be an $(N, d)$ harmonizable fractional
stable sheet defined by (\ref{def:X}) and let $T \subset (0, \infty)^N$ be 
an interval. Suppose $\a \in [1, 2]$ and $\tau \in \{1, \ldots, N\}$
is the integer so that (\ref{Eq:Hcon2}) holds. Let $L(x, \cdot)$ be
the jointly continuous local time of $X$ on $T$. Then, for any $\ep>0$,
almost surely there exist positive and finite constants $c_{8} =
c_{8}(\omega)$ and $r_0 = r_0(\omega)$ such that
\begin{equation}\label{Eq:Holder4}
\max_{x \in \R^d}L(x, I) \le c_{8}\,  \bigg( \prod_{\ell=1}^\tau r_\ell^{1 -
\frac {H_\ell d}{p_\ell}} \, \prod_{\ell= \tau+ 1}^N
r_\ell\bigg)^{1-\ep}
\end{equation}
for all intervals $I = [a, a + \l r_\ell \r] \subseteq T$ with $r_\ell
\le r_0.$ Here $p_\ell\ge 1$ ($1 \le \ell \le \tau$) are positive numbers
such that $\sum_{\ell=1}^\tau p_\ell^{-1} = 1$.
\end{proposition}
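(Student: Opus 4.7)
\noindent\textbf{Proof proposal for Proposition \ref{Th:Holder}.}\; My plan is to follow the Fourier-analytic moment method of Ehm, Geman--Horowitz and Xiao, adapted to the sectorial local nondeterminism (SLND) of $Z^H$ provided by Lemmas~\ref{Lem:SLND} and~\ref{Lem:PG-th}. The starting point is the identity, valid for any interval $I\subseteq T$ and any $n\ge 1$,
\begin{equation*}
\E\bigl[L(x,I)^n\bigr] = (2\pi)^{-nd}\int_{\R^{nd}}\int_{I^n}
e^{-i\sum_{j=1}^n \langle x, y^j\rangle}\,\E\exp\Bigl(i\sum_{j=1}^n \langle y^j, X(t^j)\rangle\Bigr)\,dt\,dy,
\end{equation*}
and the analogous formula for $\E\bigl[(L(x,I)-L(x',I))^n\bigr]$ involving the factor $\prod_{j=1}^n \bigl(e^{-i\langle x,y^j\rangle}-e^{-i\langle x',y^j\rangle}\bigr)$. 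Because the coordinates $X_1,\ldots,X_d$ are i.i.d.\ copies of $Z^H$, the characteristic function factors and equals $\exp\bigl(-\sum_{k=1}^d \|\sum_j y^j_k Z^H(t^j)\|_\a^\a\bigr)$.

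The core estimate is moment control: I would show that for every $n\ge 1$ and $\gamma\in[0,1)$ with $\gamma d<1$, there exists $c(n,\gamma)<\infty$ such that for all intervals $I=[a,a+\langle r_\ell\rangle]\subseteq T$ and all $x,x'\in\R^d$,
\begin{equation*}
\E\bigl[(L(x,I)-L(x',I))^n\bigr] \le c(n,\gamma)\,|x-x'|^{n\gamma}
\biggl(\prod_{\ell=1}^\tau r_\ell^{\,1-H_\ell d/p_\ell}\prod_{\ell=\tau+1}^N r_\ell\biggr)^{n}\cdot R(r,\gamma),
\end{equation*}
where $R(r,\gamma)$ carries a mild extra factor (a power of $\prod_\ell r_\ell^{-H_\ell\gamma/p_\ell}$) that is later absorbed. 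To produce this bound I would fix any permutation of $\{t^1,\ldots,t^n\}$ (the order being chosen \emph{coordinatewise}, sorting each coordinate separately so that consecutive points are close in the sectorial sense), apply Lemma~\ref{Lem:PG-th} with the lower-triangular change of variables, and then use Lemma~\ref{Lem:SLND}(ii) to lower bound each conditional scale parameter by the sum $\sum_{\ell=1}^N \min_{0\le k<j}|t^j_\ell-t^k_\ell|^{H_\ell}$. Integrating the Gaussian-type Fourier exponential $\exp(-c|v|^\a)$ over $v$ produces a factor of the form $\prod_j \bigl(\sum_\ell \min_k |t^j_\ell-t^k_\ell|^{H_\ell}\bigr)^{-d}$, and H\"older's inequality with exponents $p_1,\ldots,p_\tau$ (with $\sum p_\ell^{-1}=1$) on the first $\tau$ coordinates splits this into the product $\prod_{\ell=1}^\tau \bigl(\min_k|t^j_\ell-t^k_\ell|^{H_\ell}\bigr)^{-d/p_\ell}$. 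Integrating the resulting expression over $I^n$ in the sorted order then gives exactly the exponents $1-H_\ell d/p_\ell$ on $r_\ell$ for $\ell\le\tau$ and $1$ on $r_\ell$ for $\ell>\tau$; the condition $\sum p_\ell^{-1}=1$ together with (\ref{Eq:Hcon2}) is precisely what keeps each coordinate integral convergent.

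From the moment bound I would derive the pointwise and H\"older-continuity moment estimates
\begin{equation*}
\E\bigl[L(x,I)^n\bigr]\le c(n)\,\Lambda(I)^n,\qquad
\E\bigl[(L(x,I)-L(x',I))^n\bigr]\le c(n,\gamma)\,|x-x'|^{n\gamma}\Lambda(I)^{n(1-\delta)},
\end{equation*}
with $\Lambda(I):=\prod_{\ell\le\tau}r_\ell^{1-H_\ell d/p_\ell}\prod_{\ell>\tau}r_\ell$ and $\delta$ arbitrarily small. A standard Garsia--Rodemich--Rumsey or Kolmogorov argument then upgrades these to $\max_{x\in\R^d}L(x,I)\le c\,\Lambda(I)^{1-\ep}$ with an exceptional set whose probability decays polynomially in $\Lambda(I)$, because $L(x,I)$ is supported on the bounded set $X(I)$ whose diameter is controlled by Lemma~\ref{Lem:unimod}.

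Finally, to pass from a single interval to a \emph{uniform} statement over all $I=[a,a+\langle r_\ell\rangle]\subseteq T$ with $r_\ell\le r_0$, I would discretize: cover $T$ by dyadic intervals $I_{n,k}$ of sidelengths $2^{-n/H_\ell}$ (the natural anisotropic scaling) and apply the moment bounds at each dyadic level together with Borel--Cantelli to control $\max_x L(x,I_{n,k})$ simultaneously over all $k$ for each $n$. A monotonicity and covering argument then extends the bound to arbitrary subintervals with $r_\ell\le r_0$, losing only the factor $\Lambda(I)^{-\ep}$. The principal obstacle is the bookkeeping in the moment estimate: one must sort the $n$ sample points independently in each coordinate (so the permutation depends on $\ell$), apply Lemma~\ref{Lem:PG-th} in the appropriate conditioning order for each coordinate, and then carry out H\"older's inequality with weights $p_\ell$ so as to obtain exactly the anisotropic exponents $1-H_\ell d/p_\ell$; this is exactly the step where the hypothesis $\sum p_\ell^{-1}=1$ together with (\ref{Eq:Hcon2}) is used to keep all integrals finite.
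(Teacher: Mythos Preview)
Your proposal is correct and follows essentially the same route as the paper: the paper reduces Proposition~\ref{Th:Holder} to the moment estimates of Lemma~\ref{Lem:M1} (proved exactly via the Fourier formulas \eqref{Eq:moment1}--\eqref{Eq:moment2}, Lemma~\ref{Lem:PG-th}, the sectorial LND of Lemma~\ref{Lem:SLND}(ii), and H\"older with exponents $p_1,\ldots,p_\tau$, as in Lemmas~4.3 and~4.7 of Xiao (2011)), and then invokes the standard chaining argument of Ehm (1981)/Geman--Horowitz to pass to the uniform bound \eqref{Eq:Holder4}. The only cosmetic difference is that the paper records the increment moment bound with the full exponent $n$ on $\Lambda(I)$ (no $\delta$-loss), whereas you allow a small loss and absorb it into the final $1-\ep$; either version suffices for \eqref{Eq:Holder4}.
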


The proof of Proposition \ref{Th:Holder} relies on the moment estimates
in Lemma \ref {Lem:M1} below and a chaining argument in Ehm (1981) [see
also Geman and Horowitz (1980, pp 49-50)]. Since this proof is standard,
we omit it.

We remark that, unlike the Gaussian cases considered in Xiao (1997) and
Wu and Xiao (2011), we are not able to replace the exponent $1 - \eps$
in \eqref{Eq:Holder4} by $1$ together with a logarithmic factor. This
is due to the inability in taking full advantage of the sectorial local
nondeterminism in Part (ii) of Lemma \ref{Lem:SLND}. Instead, our proof
of Lemma \ref{Lem:M1} below makes use of Lemma \ref{Lem:PG-th}.

\begin{lemma}\label{Lem:M1}
Assume the conditions of Proposition \ref{Th:Holder} hold. Then for any
positive constants $p_\ell\ge 1$ ($1 \le \ell \le \tau$) such that
$\sum_{\ell=1}^\tau p_\ell^{-1} = 1$ the following statements hold.
\begin{itemize}
\item[(i)]\, For all
integers $n \ge 1$, there exists a positive and finite constant
$c_{9}= c_{9}(n)$ such that for all intervals $I = [a,
a + \l r_\ell \r] \subseteq T$ with side-lengths $r_\ell \in (0, 1)$
and all  $x \in \R^d$
\begin{equation}\label{Eq:m11}
\E \big[L(x, I)^n\big] \le  c_{9}\,\bigg( \prod_{\ell=1}^\tau
r_\ell^{1 - \frac {H_\ell d}{p_\ell}} \, \prod_{\ell= \tau+ 1}^N
r_\ell\bigg)^n.
\end{equation}
\item[(ii)]\, For all
even integers $n \ge 2$ and all $\gamma \in (0, 1)$ small enough,
there exists a positive and finite constant $c_{10}=
c_{10}(n)$ such that for all intervals $I = [a, a+ \l r_\ell
\r] \subseteq T$, $x,\, y \in \R^d$ with $|x - y|\le 1$,
\begin{equation}\label{Eq:m20}
\E \Big[\big(L(x, I) - L(y, I)\big)^n\Big]  \le c_{10}\,
|x-y|^{n \gamma}\, \bigg( \prod_{\ell=1}^\tau r_\ell^{1 - \frac
{H_\ell d}{p_\ell}} \, \prod_{\ell= \tau+ 1}^N r_\ell\bigg)^n.
\end{equation}
\end{itemize}
\end{lemma}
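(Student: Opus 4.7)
The plan is to use the Fourier inversion formula for the local time together with the characteristic function \eqref{Eq:ChF1} to reduce both moment estimates to multiple integrals that can be decoupled via Lemma \ref{Lem:PG-th} and bounded using Lemma \ref{Lem:SLND}(ii). The approach is the stable analogue of the Gaussian method used in Wu and Xiao (2011), adapting Xiao (2011) from $N=1$ to the anisotropic $N$-dimensional setting.

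For part (i), I would start from the standard identity
\begin{equation*}
\E[L(x, I)^n] = (2\pi)^{-nd}\int_{I^n}\!\int_{\R^{nd}} e^{-i\sum_j\langle x,\xi^j\rangle}\,\E \exp\Big(i\sum_j\langle \xi^j, X(t^j)\rangle\Big)\,d\bar\xi\, d\bar t.
\end{equation*}
Since $X_1,\dots,X_d$ are i.i.d.\ copies of $Z^H$, the absolute value of the characteristic function factorizes as $\prod_{k=1}^d \exp(-\|\sum_j\xi^j_k Z^H(t^j)\|_\a^\a)$. Applying Lemma \ref{Lem:PG-th} to each factor and changing variables $v^{(k)}=\xi^{(k)}A$ (Jacobian $1$, since $A$ is lower triangular with unit diagonal), the elementary inequality $(\sum_j a_j)^\a \ge \sum_j a_j^\a$ (valid since $\a\ge 1$) decouples the exponent and produces $\prod_j \exp(-c|v^{(k)}_j|^\a R_j^\a)$, where $R_j := \|Z^H(t^j)\mid Z^H(t^1),\dots,Z^H(t^{j-1})\|_\a$ for $j\ge 2$ and $R_1\asymp 1$ on $T$. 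The one-dimensional identity $\int_\R e^{-c|v|^\a R^\a}\,dv=CR^{-1}$ then gives $\E[L(x,I)^n]\le C\int_{I^n}\prod_{j\ge 2}R_j^{-d}\,d\bar t$. By Lemma \ref{Lem:SLND}(ii), $R_j \ge c_7 \sum_\ell \min_{0\le k<j}|t^j_\ell - t^k_\ell|^{H_\ell}$, so $R_j$ dominates each coordinate summand and, using $\sum 1/p_\ell = 1$, we get the H\"older-type bound $R_j^{-d} = \prod_{\ell=1}^\tau R_j^{-d/p_\ell}\le \prod_{\ell=1}^\tau (\min_{k<j}|t^j_\ell - t^k_\ell|)^{-H_\ell d/p_\ell}$. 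The product structure $I=\prod_\ell[a_\ell,a_\ell+r_\ell]$ and Fubini then separate the $\bar t$-integral into coordinate-wise ones: for $\ell>\tau$ this contributes $r_\ell^n$, while for $\ell\le\tau$ the hypothesis $d<\sum_{\ell=1}^\tau 1/H_\ell$ permits $p_\ell>H_\ell d$ (hence $H_\ell d/p_\ell<1$), and decomposing $[a_\ell,a_\ell+r_\ell]^n$ into the $n!$ ordered simplices and performing the substitution $s_j=t^j_\ell-t^{j-1}_\ell$ on each yields a bound $\le C\,r_\ell\cdot(r_\ell^{1-H_\ell d/p_\ell})^{n-1}\le C\, r_\ell^{n(1-H_\ell d/p_\ell)}$ after using $r_\ell\le 1$. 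Multiplying over $\ell$ proves \eqref{Eq:m11}.

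For part (ii), the pointwise inequality $|e^{-i\langle x,\xi\rangle}-e^{-i\langle y,\xi\rangle}|\le 2^{1-\gamma}|x-y|^\gamma|\xi|^\gamma$ introduces the extra weight $|x-y|^{n\gamma}\prod_j|\xi^j|^\gamma$. Bounding $|\xi^j|^\gamma\le \sum_k|\xi^j_k|^\gamma$ and expanding reduces matters to estimating, for each $(k_1,\dots,k_n)\in\{1,\dots,d\}^n$, the part (i) integral with $\prod_j|\xi^j_{k_j}|^\gamma$ inserted. After the $v$-substitution each $|\xi^j_k|^\gamma$ is bounded by a linear combination of $|v^m_k|^\gamma$ (with $m\ge j$) via the off-diagonal entries of $A^{-1}$, and the identity $\int_\R|v|^\gamma e^{-c|v|^\a R^\a}\,dv=CR^{-(1+\gamma)}$ shifts each affected $R_j^{-d}$ to $R_j^{-(d+\gamma)}$. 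Taking $\gamma>0$ small enough so that $p_\ell>H_\ell(d+\gamma)$ remains compatible with $\sum 1/p_\ell=1$, the coordinate-wise integration then proceeds exactly as before and produces \eqref{Eq:m20}. The main technical obstacle is precisely this redistribution of $|\xi^j_k|^\gamma$ among the $|v^m_k|^\gamma$ under the triangular change of variables: the entries of $A^{-1}$ must be controlled in terms of the scales $R_j$ (equivalently, inter-point distances in $\bar t$) in order for the $v$-integration and the subsequent Fubini/simplex decomposition of the $\bar t$-integral to go through cleanly. Apart from this bookkeeping, the rest is routine once the decoupling in the second paragraph is combined with the sectorial local nondeterminism and the anisotropic product structure of $I$.
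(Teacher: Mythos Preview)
Your proposal is correct and follows essentially the same approach as the paper, which itself defers the details to Lemmas~4.3 and~4.7 of Xiao (2011): start from the Fourier-analytic identities \eqref{Eq:moment1}--\eqref{Eq:moment2}, decouple via Lemma~\ref{Lem:PG-th} combined with the sectorial local nondeterminism of Lemma~\ref{Lem:SLND}(ii), and then integrate coordinate-wise using the $p_\ell$-decomposition of the exponent. You have also correctly isolated the one genuinely delicate point in part~(ii), namely the redistribution of the extra factors $|\xi^j_k|^\gamma$ after the triangular change of variables $v = \xi A$, which is precisely the bookkeeping carried out in the proof of Lemma~4.7 in Xiao (2011).
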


\begin{proof}\ This is essentially proved in Lemmas 4.3 and 4.7 in
Xiao (2011). The difference is that $I$ is assumed to be a cube in
Xiao (2011) and here it is an arbitrary interval, which is needed
for proving Theorem \ref{Th:IMdim2} below.

Again we start with the following identities about the moments
of the local time and its increments from  Geman and Horowitz (1980):
For all $x, y \in \R^d$ and all integers $n \ge 1$,
\begin{equation} \label{Eq:moment1}
\begin{split}
\E \big[ L(x, I)^n\big] &= (2 \pi)^{- nd} \int_{I^n} \int_{{
\R}^{nd}} \exp \bigg( - i \sum_{j=1}^{n} \l u^j, x\r \bigg)
\E \exp\bigg( i \sum_{j=1}^{n} \l
u^j, X(t^j)\r \bigg) d\overline{u}\ d\overline{t}
\end{split}
\end{equation}
and for all even integers $n \ge 2$,
\begin{equation} \label{Eq:moment2}
\begin{split}
\E\big[ \big(L(x, I) - L(y, I)\big)^n\big] =  &(2 \pi)^{- nd} \int_{I^n}
\int_{{\R}^{nd}} \prod_{j=1}^{n} \big[ e^{- i \l u^j, x\r} - e^{-
i \l u^j, y \r}\big]\\
& \qquad \qquad \qquad \times \E \exp \bigg( i \sum_{j=1}^n \l u^j, X(t^j)
\r\bigg) d\overline{u}\ d\overline{t},
\end{split}
\end{equation}
where $\overline{u} = ( u^1, \ldots, u^n),\ \overline{t} = (t^1,
\ldots, t^n),$ and each $u^j \in \R^d,\ t^j \in I \subseteq (0,
\infty)^N.$ In the coordinate notation we then write  $u^j = (u^j_1,
\ldots, u^j_d).$
Then upper bounds for \eqref{Eq:moment1} and \eqref{Eq:moment2} can
be derived by using Lemmas \ref{Lem:SLND}  and \ref{Lem:PG-th}. More
precisely, the proof of \eqref{Eq:m11} is the same as the proof of
Lemma 4.3, up to (4.28) in Xiao (2011), and the proof of \eqref{Eq:m20}
is the same as the proof of Lemma 4.7, up to (4.47) in Xiao (2011).
This finishes the proof of Lemma \ref{Lem:M1}.
\end{proof}

%\begin{lemma}\label{Lem:M2}
%Suppose the assumptions of Theorem \ref{Th:Holder} hold. For all
%even integers $n \ge 2$ and all $\gamma \in (0, 1)$ small enough,
%there exists a positive and finite constant $c_{_{4,11}}=
%c_{_{4,11}}(n)$ such that for all hypercubes $I = [a, a+ \l r_\ell
%\r] \subseteq T$, $x,\, y \in \R^d$ with $|x - y|\le 1$,
%\begin{equation}\label{Eq:m20}
%\E \Big[\big(L(x, I) - L(y, I)\big)^n\Big]  \le c_{_{4,11}}\,
%|x-y|^{n \gamma}\, \bigg( \prod_{\ell=1}^\tau r_\ell^{1 - \frac
%{H_\ell d}{p_\ell}} \, \prod_{\ell= \tau+ 1} r_\ell\bigg)^n,
%\end{equation}
%where $p_\ell\ge 1$ ($1 \le \ell \le \tau$) are positive numbers
%such that $\sum_{\ell=1}^\tau p_\ell = 1$
%\end{lemma}

\subsection{Inverse image $X^{-1}(F)$}
\label{sec:Inverse}

In this section we  prove the following uniform result for the Hausdorff dimensions
of the inverse images $X^{-1}(F)$, where $F \subseteq \R^d$ are Borel
sets, of the harmonizable fractional stable sheet defined in (\ref{def:X}). For definition 
and basic properties (such as $\sigma$-stability and Frostman's lemma) of Hausdorff dimension,
we refer to Falconer (1990), or Khoshnevisan (2002).

%Now we state the main result of this section.

\begin{theorem} \label{Th:IMdim2}
Let $X =\{X(t), t \in \R^N\}$ be an $(N, d)$ harmonizable fractional
stable sheet defined in (\ref{def:X}). We assume that $\a \in [1, 2]$
and $ \sum_{j=1}^N \frac 1 {H_j} > d$. Denote the random open set
\begin{equation}\label{Eq:O}
{\cal O} = \bigcup_{s, t \in \Q^N; \, 0 \prec s \prec t} \Big\{x \in \R^d:\
L(x, [s, t]) > 0 \Big\}.
\end{equation}
Then the following statements hold:
\begin{itemize}
\item[(i)]\, With probability 1,
\begin{equation}\label{Eq:IMdim2}
\begin{split}
\dim X^{-1}(F) &= \min_{1 \le k \le N} \bigg\{ \sum_{j=1}^k
\frac{H_k} {H_j} + N-k - H_k (d- \dim F)\bigg\}\\
&= \sum_{j=1}^k \frac{H_k} {H_j} + N-k - H_k (d- \dim F),  \
\hbox{ if } \sum_{j=1}^{k-1} \frac 1 {H_j} \le d- \dim F <
\sum_{j=1}^{k} \frac 1 {H_j}
\end{split}
\end{equation}
for all Borel sets $F \subseteq {\cal O}$. In the above we use the
convention $\displaystyle\sum_{j=1}^0H_j^{-1}=0$.
\item[(ii)]\, $\P\{ {\cal O}= \R^d\} > 0$.
\end{itemize}
\end{theorem}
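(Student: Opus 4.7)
My plan is to prove part~(i) by showing matching upper and lower bounds for $\dim X^{-1}(F)$ that both hold on a \emph{single} event of probability one, uniformly in $F\subseteq\mathcal{O}$, and then to deduce part~(ii) from the operator-scaling property.

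For the upper bound in (i), I would work on the a.s.\ event where the uniform modulus of continuity from Lemma~\ref{Lem:unimod} is valid on every bounded rational rectangle. Given $\eta>\dim F$, cover $F$ by Euclidean balls $B(x_i,r_i)$ with $\sum_i r_i^{\eta}<1$ and $r_i$ arbitrarily small. By the modulus, the preimage $X^{-1}(B(x_i,r_i))\cap T$ is contained in a single $\rho$-ball of radius $\asymp r_i|\log r_i|^{1/\a+\ep}$ around any one of its points, so it is covered by anisotropic Euclidean boxes of sides $r_i^{1/H_j}$ (up to log factors). Applying the standard anisotropic covering argument --- splitting coordinates at some index $k$, as in the proof of Theorem~2.3 in Bierm\'e, Lacaux and Xiao~(2009) --- and summing over $i$ produces
\[
\dim X^{-1}(F)\le\min_{1\le k\le N}\left\{\sum_{j=1}^{k}\frac{H_k}{H_j}+N-k-H_k(d-\eta)\right\};
\]
letting $\eta\downarrow\dim F$ finishes the upper bound, uniformly in $F$ since the null set does not depend on $F$.

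For the lower bound, I would use Frostman's lemma together with a local-time pull-back. By countable stability of Hausdorff dimension and the definition of $\mathcal{O}$, one reduces to the case where $F$ is contained in a compact subset of $\{x:L(x,T)>0\}$ for a fixed rational rectangle $T\subset(0,\infty)^N$. Given $\gamma<\beta_k$ with $k$ determined by $\sum_{j<k}H_j^{-1}\le d-\dim F<\sum_{j\le k}H_j^{-1}$, pick a Frostman measure $\mu$ on $F$ with finite $(\dim F-\delta)$-energy and define the random measure $\nu(A):=\int_F L(x,A)\,\mu(dx)$ on $X^{-1}(F)\cap T$. A dyadic decomposition of $T\times T$ in the $\rho$-metric, combined with Proposition~\ref{Th:Holder} applied with exponents $p_\ell\ge 1$ (with $\sum_{\ell\le k}p_\ell^{-1}=1$) tuned so that the anisotropic side-length factors match the minimizing index $k$, bounds the $\gamma$-energy of $\nu$ in the $\rho$-metric almost surely. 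Frostman's lemma then yields $\gamma$ as a lower bound for the Hausdorff dimension of $X^{-1}(F)$ in the $\rho$-metric, which converts to $\dim X^{-1}(F)\ge\gamma$ in the Euclidean metric by the standard relation. The crucial feature is the \emph{uniformity in $x$} of Proposition~\ref{Th:Holder}: its random constant controls $\max_{x\in\R^d}L(x,I)$, so a single null set works for every $F$.

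For part~(ii), let $\Sigma:=\sum_{j=1}^N H_j$. The scaling relation (\ref{Eq:OSS}) applied componentwise gives $\{X(ct),t\in\R^N\}\stackrel{d}{=}\{c^{\Sigma}X(t),t\in\R^N\}$ for every $c>0$, and tracking this through the definition of the local time yields $\mathcal{O}\stackrel{d}{=} c^{\Sigma}\mathcal{O}$ (for rational $c$, the countable union over rational rectangles is preserved exactly). Hence $p_R:=\P\{B(0,R)\subseteq\mathcal{O}\}$ does not depend on $R>0$. Writing $\{\mathcal{O}=\R^d\}=\bigcap_{R\in\N}\{B(0,R)\subseteq\mathcal{O}\}$ as a decreasing intersection of events of common probability $p$ gives $\P\{\mathcal{O}=\R^d\}=p$ by continuity from above; similarly $\{0\in\mathcal{O}\}=\bigcup_{R>0}\{B(0,R)\subseteq\mathcal{O}\}$ (since $\mathcal{O}$ is open) is an increasing union with common probability $p$, so $p=\P\{0\in\mathcal{O}\}$. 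It remains to check $p>0$, which follows from $\E L(0,T)=\int_T p_t(0)\,dt>0$ for some rational $T\subset(0,\infty)^N$, as the density $p_t$ of the S$\a$S vector $X(t)$ has full support on $\R^d$.

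The hardest step is the uniform-in-$F$ lower bound in part~(i). Without the uniform-in-$x$ H\"older condition of Proposition~\ref{Th:Holder}, each Borel $F$ would require its own null set, and no single event of positive probability could handle all $F\subseteq\mathcal{O}$ simultaneously. It is precisely this uniformity in the set variable that lifts the $L^\infty(\P)$-statement (\ref{Eq:L-infty}) to the genuine uniform Hausdorff dimension result of Theorem~\ref{Th:IMdim2}.
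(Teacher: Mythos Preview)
Your upper-bound argument in part~(i) contains a genuine gap: the uniform modulus of continuity in Lemma~\ref{Lem:unimod} goes the wrong way for what you claim. The modulus says that $\rho(s,t)$ small forces $|X(s)-X(t)|$ small; it does \emph{not} say that $|X(s)-X(t)|$ small forces $\rho(s,t)$ small. Hence $X^{-1}(B(x_i,r_i))\cap T$ is \emph{not} contained in a single $\rho$-ball of radius $\asymp r_i$---the process can revisit the same neighborhood of $\R^d$ from $\rho$-far-apart time points. The covering argument in Bierm\'e, Lacaux and Xiao~(2009) that you invoke works only for a fixed $F$ (via expected covering numbers), and its null set depends on $F$, so it cannot produce a uniform statement.

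The paper repairs this with Lemma~\ref{Lem:uppbd}: on the good event where the local time $L(\cdot,T)$ is bounded and the modulus holds, the occupation density formula controls \emph{how many} anisotropic rectangles $R_{n,i}$ of side-lengths $2^{-n/H_j}$ can meet $X^{-1}(B(y,2^{-(1-\ep)n}))$. Indeed, for each such $R_{n,i}$ the modulus gives $X(R_{n,i})\subseteq B(y,2^{-(1-\ep)n+1})$, so
\[
N_\rho(y,n,T)\cdot 2^{-Qn}\le \int_{B(y,2^{-(1-\ep)n+1})}L(x,T)\,dx\le c\,2^{-(1-\ep)dn},
\]
whence $N_\rho(y,n,T)\le c\,2^{(Q-(1-\ep)d)n}$ uniformly in $y$. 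This deterministic covering-number bound is exactly what allows the upper bound to hold simultaneously for all $F$. Your lower-bound sketch (Frostman measure pulled back through $L$, controlled by the uniform-in-$x$ H\"older condition of Proposition~\ref{Th:Holder}) matches the paper's approach, and your scaling argument for part~(ii) is correct and arguably cleaner than the paper's Fatou-lemma version---but the upper bound needs the local-time ingredient, not just the modulus.
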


%Note that, the exceptional null probability event in Theorem
%\ref{Th:IMdim2} does not depend on $F$. Hence (\ref{Eq:IMdim2}) is referred
%as a \emph{uniform Hausdorff dimension result}. This result is new even for
%fractional Brownian sheets and other anisotropic Gaussian random
%fields, which were considered by Bierm\'e, Lacaux and Xiao (2009).

In order to prove Theorem \ref{Th:IMdim2}, we will make use of the
following lemma where $Y$ can be quite general.
\begin{lemma}\label{Lem:uppbd}
Let $Y = \{Y(t), t \in \R^N\}$ be a function with values in $\R^d$.
We assume that for every bounded interval $T \subseteq \R^N$ and
every $\ep > 0$,
\begin{equation}\label{Eq:Y-Holder}
\big|Y(s) - Y(t) \big| \le \rho(s, t)^{1 - \ep}\qquad \forall s, t
\in T
\end{equation}
and $Y$ has a local time $L(x, T)$ which is bounded in the space variable $x$.
Then for every Borel set $F
\subseteq \R^d$,
\begin{equation}\label{Eq:updim1}
\dim Y^{-1}(F) \le \min_{1 \le k \le N} \bigg\{ \sum_{j=1}^k
\frac{H_k} {H_j} + N-k - H_k (d- \dim F)\bigg\}.
\end{equation}
\end{lemma}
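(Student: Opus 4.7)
My plan is to establish the claimed bound for each $k \in \{1,\ldots,N\}$ separately and then take the minimum. By $\sigma$-stability of Hausdorff dimension and the decomposition $\R^N = \bigcup_n T_n$ into bounded intervals, it suffices to bound $\dim (Y^{-1}(F) \cap T)$ for a single bounded interval $T \subset \R^N$; set $M := \|L(\cdot,T)\|_\infty < \infty$. The two key ingredients are: (a) the H\"older condition (\ref{Eq:Y-Holder}), which turns small $\rho$-balls into small Euclidean balls in $\R^d$; and (b) the occupation density formula (\ref{Eq:occupation}), which, via the bound on the local time, yields $\la_N\bigl(Y^{-1}(B) \cap T\bigr) \le M\,\la_d(B)$ for every Borel set $B \subseteq \R^d$.

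\textbf{Anisotropic partition.} I will fix $k$, $\eps > 0$, and $\gamma > \dim F$, and cover $F$ by balls $\{B(y_i, s_i)\}$ with $\sum_i s_i^\gamma$ arbitrarily small and each $s_i$ arbitrarily small. For each $i$, set $r_i := s_i^{1/(1-\eps)}$ and partition $T$ into almost disjoint rectangles with side length $r_i^{1/H_j}$ in direction $j$ for $j \le k$ and side length $r_i^{1/H_k}$ in direction $j$ for $j > k$, where I abbreviate $\alpha_k := \sum_{j=1}^k H_j^{-1} + (N-k)H_k^{-1}$. Under the ordering $H_1 \le \cdots \le H_N$, the first $k$ side lengths are $\le r_i^{1/H_k}$ and the contributions $(r_i^{1/H_k})^{H_j}$ for $j > k$ are $\le r_i$, so the $\rho$-diameter of each rectangle is $\lesssim r_i$, and by (\ref{Eq:Y-Holder}) its image under $Y$ has Euclidean diameter $\lesssim r_i^{1-\eps} = s_i$. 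Meanwhile the Euclidean diameter of each rectangle is $\asymp r_i^{1/H_k}$ (the maximum side length).

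\textbf{Counting and the $s$-content.} If a rectangle $R$ of the $i$th partition meets $Y^{-1}(B(y_i,s_i))$, then $Y(R) \subset B(y_i, c\,s_i)$, so the union of all such rectangles is contained in $Y^{-1}(B(y_i,c s_i)) \cap T$, which by (a) has Lebesgue measure $\le c' s_i^d$. Since each rectangle has $N$-volume $r_i^{\alpha_k}$, the number of relevant rectangles for the $i$th ball is at most $c'' s_i^d r_i^{-\alpha_k}$. The collection, as $i$ varies, covers $Y^{-1}(F) \cap T$ by sets of Euclidean diameter $\asymp r_i^{1/H_k}$. Hence its $s$-dimensional Hausdorff content is bounded by a constant multiple of
\[
\sum_i s_i^d\, r_i^{-\alpha_k} \cdot r_i^{s/H_k} \;=\; \sum_i s_i^{\,d + (s/H_k - \alpha_k)/(1-\eps)}.
\]
For $s_i \le 1$, this is dominated by $\sum_i s_i^\gamma$ as soon as $d + (s/H_k - \alpha_k)/(1-\eps) \ge \gamma$, i.e.\ $s \ge H_k\alpha_k + H_k(1-\eps)(\gamma - d) = \sum_{j=1}^k H_k/H_j + (N-k) + H_k(1-\eps)(\gamma - d)$. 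Letting $\eps \downarrow 0$, $\gamma \downarrow \dim F$, and refining the covers so that the $s_i$ (hence the diameters $r_i^{1/H_k}$) tend to zero, gives $\mathcal{H}^s(Y^{-1}(F) \cap T) = 0$ for every $s > \sum_{j=1}^k H_k/H_j + (N-k) - H_k(d - \dim F)$.

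\textbf{Conclusion and main obstacle.} Taking the minimum over $k$ and the union over bounded $T$ yields (\ref{Eq:updim1}). The main technical point, and the only place anisotropy really bites, is the verification that the rectangles of the tailored partition have Euclidean diameter truly of order $r_i^{1/H_k}$ rather than something larger; this rests on the ordering $H_1 \le \cdots \le H_N$, which forces $r_i^{1/H_j} \le r_i^{1/H_k}$ for $j \le k$ (as $r_i < 1$) and $r_i^{H_j/H_k} \le r_i$ for $j > k$. Once this is in hand the rest is bookkeeping of exponents, and the assumption of boundedness of $L(\cdot,T)$ in the space variable is used exactly once, to upgrade the cover of $F$ into a volumetric bound on its preimage.
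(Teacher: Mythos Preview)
Your proof is correct and follows the same core strategy as the paper's: cover $F$ by small balls, pull them back via the H\"older condition, use the occupation density formula with the bounded local time to count how many covering rectangles can meet each preimage, and sum the resulting diameters. The paper also attributes this scheme to Monrad and Pitt (1987).

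The one genuine difference is in the choice of covering rectangles. The paper partitions $T$ into ``$\rho$-cubes'' $R_{n,i}$ with side-lengths $2^{-n/H_j}$ in every direction $j$ (so that each is a $\rho$-ball of radius $\asymp 2^{-n}$), applies the occupation density bound to count these, and then, for each fixed $k$, sub-covers every $R_{n,i}$ by roughly $\prod_{\ell=k}^N 2^{n(1/H_k-1/H_\ell)}$ Euclidean cubes of side $2^{-n/H_k}$. You instead choose a $k$-dependent partition from the outset, with sides $r^{1/H_j}$ for $j\le k$ and $r^{1/H_k}$ for $j>k$; these already have Euclidean diameter $\asymp r^{1/H_k}$, so no secondary re-covering is needed. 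Your route is a bit more direct and the bookkeeping of exponents is cleaner; the paper's route has the minor advantage that the initial counting step (via the occupation density formula) is done once, independently of $k$, and only the sub-covering depends on $k$. Both rely on the standing ordering $H_1\le\cdots\le H_N$ in exactly the way you flag in your ``main obstacle'' paragraph, and both arrive at the same exponent.
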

\begin{proof}\ Our argument is reminiscent to the proof of a corresponding
result in Monrad and Pitt (1987). For any integer $n \ge 2$, we divide $T$
into sub-rectangles $R_{n,i}$ of side-lengths $2^{-n/H_j}$ ($j = 1, \ldots, N$).
Note that there are at most $c\, 2^{n Q}$ such rectangles (recall that $Q =
\sum_{j=1}^N \frac1 {H_j}$), and each $R_{n, i}$ is equivalent to a
ball of radius $2^{-n}$ in the metric $\rho$.

For any (open or closed) ball $B(y, 2^{-(1-\ep)n})$, let $N_\rho(y,
n, T)$ be the number of $R_{n, i}$'s such that $R_{n, i} \cap
X^{-1}(B(y, 2^{-(1-\ep)n})) \ne \varnothing$. For each such
rectangle $R_{n, i}$, the uniform H\"older condition
(\ref{Eq:Y-Holder}) implies that $X(R_{n, i}) \subseteq B(y,\,
2^{-(1-\ep)n+1})$. It follows from the occupation density formula
(\ref{Eq:occupation}) that
\begin{equation}\label{Eq:NT}
\begin{split}
N_\rho(y, n, T) \, 2^{-Q n} \le \int_{B(y, \, 2^{-(1-\ep)n +1})}
L(x, T)\, dx \le c\, 2^{- (1-\ep)d n }.
\end{split}
\end{equation}
This yields
\begin{equation}\label{Eq:NT2}
N_\rho(y, n, T) \le c_{11}\, 2^{(Q - (1-\ep)d) n}.
\end{equation}
Note that the constant  $c_{11}$ is independent of $y$ and $n$.

Now we prove \eqref{Eq:updim1} by using (\ref{Eq:NT2}) and a
covering argument. Note that it is sufficient to show that $\dim
\big(Y^{-1}(F)\cap T\big)$ is bounded by the right hand side of
\eqref{Eq:updim1}. The general result follows from the arbitrariness
of $T$ and the $\sigma$-stability of $\dim $.

Given any Borel set $F \subseteq \R^d$, we choose and fix an
arbitrary constant $\gamma > \dim F$. Then there exist a constant
$\delta >0$ and a sequence of balls $\{B(y_j, r_j), j \ge 1\}$ in
$\R^d$ (in Euclidean metric) such that $r_j \le \delta$ for all $j
\ge 1$,
\begin{equation}\label{Eq:Covering}
F \subseteq \bigcup_{j=1}^\infty B(y_j, r_j) \quad \hbox{ and }\quad
\sum_{j=1}^\infty (2r_j)^{\gamma} \le 1.
\end{equation}

For every integer $j \ge 1$, let $n_j$ be the integer such that
$2^{-(1-\ep) (n_j+1)} \le r_j < 2^{-(1 - \ep)n_j}$.  Since
$$
X^{-1}(F) \subseteq \bigcup_{j=1}^\infty X^{-1}\big(B(y_j,
r_j)\big),
$$
we obtain a covering of $X^{-1}(F) \cap T$ by a subsequence of
intervals $\{R_{n_j, i}\}$ (i.e., those satisfying $R_{n_j, i} \cap
X^{-1}\big(B(y_j, r_j)\big) \ne \varnothing$). For simplicity,
we set $N_j = N_\rho(y_j, n_j, T)$.

For every $k \in \{1, \ldots, N\}$, each rectangle $R_{n_j, i}$ can
be covered by at most $\prod_{\ell= k}^N \big(2^{n_j ( \frac 1 {H_k}
- \frac 1 {H_\ell})}$ $+1\big)$ \emph{cubes} of side-length
$2^{-n_j/H_k}$. In this way, we obtain a covering of $X^{-1}(F)\cap
T$ by cubes of side-length $2^{-n_j/H_k}$ which can be used to bound
the Hausdorff measure of $X^{-1}(F)\cap T$. Let
\[
\beta_k = \sum_{j=1}^k \frac{H_k} {H_j} + N-k - H_k (d- \gamma).
\]
It follows from \eqref{Eq:Covering} and \eqref{Eq:NT2} that
\begin{equation}\label{Eq:covering3}
\sum_{j=1}^\infty N_j \prod_{\ell= k}^N \Big(2^{n_j ( \frac 1 {H_k}
- \frac 1 {H_\ell})} + 1\Big)\, 2^{- \frac {n_j} {H_k} \,\beta_k}
\le c\, \sum_{j=1}^\infty 2^{- {n_j}\gamma} \le
c_{12}.
\end{equation}
This implies that ${\cal H}_{\beta_k}\big(X^{-1}(F) \cap T) \le
c_{12}$. Hence we have $\dim \big(X^{-1}(F)\cap T\big) \le
\beta_k$ for every $k \in \{1, \ldots, N\}$. Letting $\gamma
\downarrow \dim F$ yields the desired upper bound.
\end{proof}

We are ready to prove Theorem \ref{Th:IMdim2}.  

\begin{proof}{\bf of Theorem \ref{Th:IMdim2}}\ First we prove
Part (i). Since it is elementary to verify the second equality
in (\ref{Eq:IMdim2}), we will only prove the first equality in
(\ref{Eq:IMdim2}).

For any bounded interval $T\subseteq (0, \infty)^N$, by Lemmas
\ref{Lem:unimod} and \ref{Lem:JC}, $X$ satisfies a uniform H\"older
condition in the metric $\rho$ on $T$ of order $1-\ep$ and has a jointly
continuous (hence bounded) local time on $T$. Hence the upper bound
in (\ref{Eq:IMdim2}) follows from Lemma \ref{Lem:uppbd} and the 
$\sigma$-stability  of Hausdorff dimension.

For proving the lower bound in (\ref{Eq:IMdim2}), %we assume that $F$
%is bounded so $F \subseteq V$ for some compact set $V \subseteq
%\R^d$.
let $\gamma < \dim F$ be an arbitrary constant. Then by Frostman's lemma
[cf. Falconer (1990)] there exists a Borel probability 
measure $\nu$ on $\R^d$ supported by $F$ such that
\begin{equation}\label{Eq:nu1}
\nu(B(y, r))\le c_{13}\, r^\ga, \quad \forall \ y \in \R^d\
\hbox{ and }\ r > 0.
\end{equation}
We define a random Borel measure on $(0, \infty)^N$ by
\begin{equation}\label{Eq:nu2}
\mu (I) = \int_{\R^d} L(x, I)\, \nu(dx), \ \ \hbox{ for all } \  I \subset (0, \infty)^N.
\end{equation}
Since $\nu$ is supported by $F$ and $L(\cdot, I)$ vanishes outside
the closure $\overline {X(I)}$, we can see that $\mu$ is a finite measure
supported on $X^{-1}(F)$. Moreover, for every $F \subset {\cal O}$,
we have $\mu(X^{-1}(F)) > 0$.

Let $k \in \{1, \ldots, N\}$ be the integer satisfying
\begin{equation}
\sum_{j=1}^{k-1} \frac 1 {H_j} \le d- \dim F < \sum_{j=1}^{k} \frac
1 {H_j}.
\end{equation}
For any $r \in (0, 1)$, there is an integer $n\ge 0$ such that
\begin{equation}\label{Eq:nu3}
2^{-(n+1)/H_k} \le r < 2^{-n/H_k}.
\end{equation}
Hence, for every $t \in (0, \infty)^N$, the Euclidean ball $U(t, r)$ can be
covered by at most $N_n$ intervals $\{ I_i = [t_i, \, t_i + \l 2^{-n/H_\ell}\r ],\,
1 \le i \le N_n\}$ (each $I_i$ is a subset of the ball centered at $t_i$ in the
metric $\rho$ of radius $\sqrt{N}\,2^{-n}$),  where
\begin{equation}\label{Eq:nu4}
N_n \le c\, \prod_{j=k + 1}^N2^{- \frac{n} {H_k} + \frac{n} {H_j}} =
c\, 2^{n \big(\sum_{j=k+1}^N \frac1 {H_j}- \frac{N-k} {H_k}\big)}.
\end{equation}

Let us fix an $\omega \in \Omega$ such that the conclusions of
both Lemma  \ref{Lem:unimod}  and Theorem \ref{Th:Holder} hold.
Hence, by (\ref{Eq:Holder4}) and \eqref{eq:mod-cont}, we have that
a. s. for all $n$ large enough,
\begin{equation}\label{Eq:LT-i}
\max_{x \in \R^d} L(x, I_i) \le c_{12} 2^{-n (1 - \eps)
\big(\sum_{\ell = 1}^N\frac 1 {H_\ell} - d\big)}
\end{equation}
and
%for every $1 \le i \le N_n$. Moreover, %holds for all $x \in \R^d$ and $r \le r_0$,  and
\begin{equation}\label{Eq:diam}
\max_{s, t \in I_i } \big|X(s) - X(t)\big| \le c\, 2^{-n (1-\eps)}
\end{equation}
for every $1 \le i \le N_n$.
%\in (0, 1)^N$ with $r_\ell$ small enough.

It follows from (\ref{Eq:nu4}), (\ref{Eq:LT-i}), \eqref{Eq:nu1}, and
\eqref{Eq:diam} that for any $\eps > 0$ small
\begin{equation}\label{Eq:Rm-1}
\begin{split}
\mu (U(t, r)) &\le \sum_{i=1}^{N_n} \int_{\R^d} L(x, I_i )\, \nu(dx)\\
&\le c_{17}\, \sum_{i=1}^{N_n}\, \max_{x \in \R^d} L(x, I_i )\,
\big[ {\rm diam} X(I_i)\big]^\ga\\
&\le c\, 2^{-n \big(\sum_{j=1}^k  \frac1 {H_j}- \frac{N-k} {H_k} - d
+ \ga - \eps\big)} = c\, r^{\eta},
\end{split}
\end{equation}
where
\[
\eta = \sum_{j=1}^k  \frac {H_k} {H_j}- N +k  - H_k(d - \ga) - \eps',
\]
where $\eps' = c_{18}\, \eps$.
This, together with the Frostman's theorem [cf. Falconer (1990)],
implies that almost surely for every $F \subset {\cal O}$,
\[
\dim X^{-1}(F) \ge \sum_{j=1}^k  \frac {H_k} {H_j}- N +k - H_k(d - \ga)
- \eps'
\]
Letting $\eps \downarrow 0$ yields the desired lower bounds
for $\dim X^{-1}(F)$. This proves (\ref{Eq:IMdim2}).

In order to prove Part (ii), we first notice that, for any bounded
interval $I \subset (0, \infty)^N$ with non-empty interior, we have
$\E\big[L(0, I)\big] > 0$. This follows directly from
(\ref{Eq:moment1}), (\ref{Eq:ChF1}) and (\ref{eq:scale}). Hence
$\P\big\{L(0, I) > 0 \big\} > 0.$ Using the a.s. continuity of
$x \to L(x, I)$, we see that there are constants $r> 0$ and $\delta > 0$
such that $\P\big\{L(x, I) > 0 \hbox{ for all } \, x \in B(0, r)\big\} \ge \delta.$
This implies that
\begin{equation} \label{Eq:posip}
\P\big\{ B(0, r) \subset {\cal O} \big\} \ge \delta.
\end{equation}
On the other hand, by the scaling property (\ref{Eq:OSS}) and
the occupation formula (\ref{Eq:occupation}), one can verify that
the local times $L(x, I)$ have the following property:
For any integer $n \ge 2$,
\begin{equation}\label{Eq:LT-sim}
L(x, I ) \stackrel{d} = n^{Nd - Q} L(n^N x, n^E I),
\end{equation}
where $E$ is the $N\times N$ diagonal matrix
diag($n^{H_1^{-1}}, \ldots, n^{H_N^{-1}}$) and
$Q = \sum_{j=1}^N H_j^{-1}$. It follows from \eqref{Eq:posip}
and \eqref{Eq:LT-sim} that
$\P\big\{ B(0, n^N r) \subset {\cal O} \big\} \ge \delta.$
By Fatou's lemma, we have
\[
\P\big\{ B(0, n^N r) \subset {\cal O} \hbox{ for infinitely
many }\, n's\big\} \ge \delta.
\]
This proves $ \P\big\{  {\cal O} = \R^d  \big\} \ge \delta$.
\end{proof}

We remark that, even though in the present paper we have focused
on harmonizable fractional stable sheet $X$, the methods in this
section are applicable to stable random fields that have jointly
continuous local times such that their uniform moduli of continuity
satisfy (\ref{eq:mod-cont}) in Lemma \ref{Lem:unimod} and 
(\ref{Eq:Holder4}) in Proposition \ref{Th:Holder}, respectively. 
With some extra effort, one can verify that Part (i) of Theorem 
\ref{Th:IMdim2} holds for a large class of $(N, d)$ stable random 
fields $X$ defined as in (\ref{def:X}) such that $X_1$ is a 
harmonizable stable random field with stationary increments. 
For example, $X_1$ can be taken as an operator-scaling harmonizable 
stable random field constructed by Bierm\'e, Meerschaert 
and Scheffler (2007) (in this case, Part (ii) of Theorem 
\ref{Th:IMdim2} also holds) or a stable random field in Sections 
3.1 and 3.2 of Xiao (2011).

This paper also raises several interesting open questions.
We list some of them below.

%We end this paper with some open questions.
\begin{itemize}
\item We believe that $\P\big\{{\cal O}= \R^d\big\} = 1$ for an $(N, d)$
harmonizable fractional stable sheet in Theorem \ref{Th:IMdim2}, but we
have not been able to prove this conjecture in general. When $\alpha = 2$,
this can be proved by applying Theorem 2 of Monrad and Pitt (1987) to the
stationary Gaussian random field obtained from a fractional Brownian sheet
via the Lamperti transform (cf. Genton et al (2007) for the latter). For
$\a \in (1, 2)$, the conjecture would be proven if one can establish a
Hewitt-Savage zero-one law for harmonizable fractional stable sheets.
See Orey and Pruitt (1973, p.141) and Tran (1976, p.32) for results for
the Brownian sheet.
%and apply An analogous result for a class of isotropic Gaussian
%random fields appears in  Monrad and Pitt (1987).
\item By checking the proofs in Section 3, one can see that the results on
local times and uniform Hausdorff dimension of $X^{-1}(F)$ would hold for
harmonizable fractional $\alpha$-stable sheets with $\a \in (0, 1)$, if the
property of local nondeterminism of $Z^H$ had been proved when $\a \in (0, 1)$.
See Remark \ref{Re:LND}.
\item In light of Bierm\'e, Lacaux and Xiao (2009) and the present paper,
it is a natural question to determine the packing dimension (cf. Falconer (1990))
of the inverse image $X^{-1}(F)$ for $F \subset \R^d$ and to establish a
corresponding uniform packing dimension result. Solving these problems
would require   new results on the local times of harmonizable fractional
stable sheets. More  precisely, instead of Proposition \ref{Th:Holder} which
is concerned with uniform modulus of continuity of the local times, it will be
essential to study the liminf behavior of the local times. No results of this
kind have been proven for stable random fields.
\item  It would be interesting to establish a similar uniform dimension result
for linear fractional stable sheets (LFSS) considered in Ayache {\it et al} (2009).
The method for proving Theorem \ref{Th:IMdim2} breaks down for a LFSS
due to the fact that its uniform modulus of continuity is different from that
of $Z^H$ in Lemma \ref{Lem:unimod}, a new method will be needed.
\item In (\ref{def:X}), the real-valued random fields $X_1, \ldots, X_d$ are
assumed to be independent copies. This technical assumption brings convenience
for studying sample path properties of $X$ but it is obviously too restrictive.
Recently, Xiao and Li (2011), Kremer and Scheffler (2018), S\"onmez (2017, 2018)
have introduced and studied several classes of operator-self-similar $(N, d)$
stable random fields whose components are dependent in general. It would be of
interest to further investigate the fractal properties and local times of these
stable random fields.
\end{itemize}

%As we mentions at the beginning of Section 3, since the uniform modulus of continuity
%of LFSS is different from that of HFSS in Lemma \ref{Lem:unimod},

\bibliographystyle{plain}
\begin{small}

\end{small}
\end{document}